\newcommand{\rone}{\mathbb{R}}
\renewcommand{\Re}{\rone}
\newcommand{\E}{\mathbf{E}}
\newcommand{\wt}{\widetilde}
\renewcommand{\P}{\mathbf{P}}
\newcommand{\mbR}{\mathbb{R}}
\let\oldmarginpar\marginpar
\renewcommand{\marginpar}[1]{\oldmarginpar{\scriptsize\texttt{\color{red}{#1}}}}
\newcommand{\ve}{\varepsilon}
\newcommand{\eps}{\varepsilon}
\newcommand{\e}{\varepsilon}
\newcommand{\sgn}{\mathrm{sgn}}
\newcommand{\1}{1\!\!\!\;{\rm I}}
\theoremstyle{plain}
\newtheorem{thm}{Theorem}[section]
\newtheorem{prop}{Proposition}[section]
\newtheorem{lem}{Lemma}[section]
\theoremstyle{definition}
\theoremstyle{remark}
\newtheorem{rem}{Remark}[section]
\DeclareMathSymbol{\ophi}{\mathalpha}{letters}{"1E}
\renewcommand{\phi}{\varphi}
\newcommand{\be}{\begin{equation}}
\newcommand{\bel}{\begin{equation}\label}
\newcommand{\ee}{\end{equation}}
\newcommand{\ben}{\begin{equation*}}
\newcommand{\een}{\end{equation*}}
\newcommand{\ba}{\begin{aligned}}
\newcommand{\ea}{\end{aligned}}
\newcommand{\cF}{\mathcal{F}}
\newcommand{\bR}{\mathbb{R}}
\newcommand{\vf}{\varphi}
\renewcommand{\d}{\mathrm{d}}
\newfont{\cyrfnt}{wncyr10}
\def\J3{\cyrfnt{\rm \u{\cyrfnt I}}}
\def\j3{\cyrfnt{\rm \u{\cyrfnt i}}}
\definecolor{DarkGreen}{rgb}{0.1,0.7,0.3}   
\definecolor{DarkGreen}{rgb}{0.1,0.7,0.3}   
\numberwithin{equation}{section}
\begin{document}

\title{On regularization by a small
noise of multidimensional ODEs with non-Lipschitz coefficients }
\author{Alexei Kulik\footnote{Wroclaw University of Science and Technology, Faculty of Pure and Applied Mathematics,
Wybrze\'ze Wyspia\'nskiego Str. 27, 50-370 Wroclaw, Poland; kulik.alex.m@gmail.com}\ \ 
 and Andrey Pilipenko\footnote{Institute of Mathematics, National Academy of Sciences of Ukraine, Tereshchenkivska Str.\ 3, 01601, Kiev, 
Ukraine} \footnote{National Technical University of Ukraine 
``Igor Sikorsky Kyiv Polytechnic Institute'',
ave.\ Pobedy 37, Kiev 03056, Ukraine; pilipenko.ay@gmail.com}}

\maketitle
\begin{abstract}
In this paper we solve a 
 selection problem for multidimensional SDE 
 $d X^\e(t)=a(X^\e(t))\, d t+\eps \sigma(X^\e(t))\, d W(t)$, 
where the drift and diffusion are locally Lipschitz continuous outside of a fixed hyperplane $H$.
 It is assumed that  $X^\e(0)=x^0\in H$, the drift $a(x)$ has 
a Hoelder asymptotics as $x$ approaches $H$, and  the limit ODE  $d X(t)=a(X(t))\, d t$
does not have a unique solution.

We show that if the drift pushes the solution away of  $H$, then the limit process  with certain probabilities selects some extremal solutions to the 
limit ODE.  If the drift attracts the solution to  $H$, then the limit process
satisfies an ODE with some averaged coefficients. To prove the last result we formulate
  an averaging principle, which is quite general and
   new.
\end{abstract}
\section{Introduction}

Consider an ODE
\bel{eq:ODE}
\ba
\frac{d u(t)}{dt}= a(u(t));\\
u(0)=0,
\ea
\ee
where  $a$ is a continuous  function of linear growth that satisfies a local  Lipschitz  condition
 everywhere except of  the point $u=0$.  Then uniqueness of the solution
to \eqref{eq:ODE} may fail;   e.g. for  $a(u)=\sqrt{|u|}\sgn (u)$ the ODE \eqref{eq:ODE} has multiple solutions $\pm t^2/4, t\geq 0.$

Consider a perturbation of  \eqref{eq:ODE}   by a small noise:
\bel{eq:SDE_small}
{d u_\ve(t)}= a(u_\ve(t)) dt+ \ve dW(t),
\ee
$$
u_\ve(0)=0,
$$
where $W$ is a Wiener process. 
Equation \eqref{eq:SDE_small} has a unique
 strong solution due to  the Zvonkin-Veretennikov theorem \cite{Veretennikov}.
It easy to  see that a family  of distributions of $\{u_\ve\}$ is weakly relatively compact because
 $a$ has a linear growth. Moreover, any limit point of $\{u_\ve\}$ as
 $\ve\to0$ satisfies equation \eqref{eq:ODE} because $a$ is continuous.
Hence, if the limit $\lim_{\ve\to0} u_\ve$ (in distribution) exists,
 then this limit may be considered as a natural selection of a solution to \eqref{eq:ODE}.

The corresponding problem was originated in papers by  Bafico and Baldi
 \cite{Ba, BaficoBaldi}, who considered the one-dimensional case;
other generalizations see, for example, in
\cite{ BOQ, BS,   Flandoli, DFV, DelarueMaurelly, DLN,
 H, KrykunMakhno,   PilipenkoPavlyukevich, PilipenkoProske1, PilipenkoProskeSelfSimilar, Trevisan} and references therein. Investigations in multidimensional case are much complicated than   in   the one-dimensional   one.   There are still  no simple sufficient
conditions that ensure existence of  a limit  
$\lim_{\ve\to0} u_\ve$ and a
 characterization of this limit. One of the reason for this is the absence of the
 linear ordering in the
multidimensional case. Indeed, in the one-dimensional situation the are only   two   ways to exit from the point 0: one way to the right and another to the left.
 The probability of going left or right can be easily obtained since there are explicit formulas
for hitting probabilities for one-dimensional diffusions. The equation for the limit process
 outside of 0 must satisfy the original ODE because $a$ is   Lipschitz  continuous there.

In this paper we consider the multidimensional case, where the Lipcshitz  condition for $a$
may fail at a hyperplane. Let us describe the corresponding model.
Consider an SDE
\bel{eq:mainSDE_small}
\ba
{d u_\ve(t)}= a(u_\ve(t)) dt+ \ve \sigma(u_\ve(t))dW(t);\\
u_\ve(0)=x^0,
\ea
\ee
where $a:\mbR^d\to\mbR^d$, $\sigma:\mbR^d\to\mbR^{d\times m} $ are measurable functions, $W$ is an $m$-dimensional Wiener process.

Assume that  $a$ and $\sigma$ are  of linear growth, $\sigma $ is continuous and satisfies the uniform ellipticity condition.  This  ensures existence and uniqueness of a weak solution to \eqref{eq:mainSDE_small} and relative compactness for the distributions of $\{u_\ve\}$.


Set $H:=\mbR^{d-1}\times\{0\}$. Suppose that
the initial starting point $x^0\in H$
and that the drift
$a$ satisfies the local Lipschitz property in $\mbR^d\setminus H$.

Note that the definition of $a$ on $H$ is inessential because $u_\ve$ spends zero time in $H$ with probability 1 due to the non-degeneracy of the diffusion coefficient.

The case when $a$ is globally Lipschitz continuous in 
the lower half-space  $\mbR^d_-:=\mbR^{d-1}\times(-\infty,0)$ and
and the upper half-space $\mbR^d_+:=\mbR^{d-1}\times(0,\infty)$ was  investigated in \cite{PilipenkoProske1}. The result was formulated in terms of the vertical components of
$a^\pm(x^0):=\lim_{x\to x^0, x\in \mbR^d_\pm  } a(x).$ In this paper we investigate the case when the drift has H\"older-type asymptotic in a neighborhood of $H$. Namely, we will assume that

 {\bf A1.} {\it $a_d(x) = |x_d|^\gamma b(x),$ where  $\gamma<1$, $x_d$
 is the $d$-th coordinate of $x=(x_1,...,x_d)$, and $b$ is a globally  Lipschitz continuous function
in $\mbR^d_+$ and $\mbR^d_-,$  $b^\pm(x)\neq 0, x\in H$.}

 {\bf A2.} {\it $a_k, k=1,\dots,d-1,$ are  globally  Lipschitz functions in $\mbR^d_+$
 and $\mbR^d_-.$}

   This case has new features, and the proofs will be based on new ideas compared to the proofs  from
 \cite{PilipenkoProske1}.   To illustrate the difference,  let us recall
 briefly  results of \cite{PilipenkoProske1}, where the case  $\gamma=0$ was considered,
and sketch the expected results in the case $\gamma\in(0,1)$.

\textit{Case 1. (The vector field $a$ pushes outwards the hyperplane)}
  Denote by $\mathbf{n}=(0,...,0,1)$    the normal vector to the hyperplane $H.$  Assume that
$\gamma=0$ and  $\pm(a^\pm(x),\mathbf{n})>0$,  $x\in H$. Then there are    two  
solutions $u^\pm$
 to
  \bel{eq:mainODE}
{d u(t)}= a(u(t)) dt
\ee
 that start at $x^0\in H$ and exit  from  $H$ immediately to the upper and the lower half spaces, respectively.
It was proved in  \cite{PilipenkoProske1} that if $\gamma=0$, then
 the limit process $u_0$ immediately leaves $H$ and moves as $u^\pm$ with probabilities
proportional to $|(a^\pm(x^0),\mathbf{n})|$.
The corresponding  proof was similar to the one-dimensional situation. It used some
 comparison principle  adapted to the multidimensional situation.
 Investigations for arbitrary $\gamma\in(0,1)$ will be similar, but selection probabilities will be  different.
\begin{rem}
It was assumed in \cite{PilipenkoProske1} that the noise is additive, i.e.,
 $\sigma $ is the identity matrix and $m=d$.   The case of multiplicative noise is completely analogous.
\end{rem}
\begin{rem}
If  $\gamma=0$ and the vector field $a$ pushes away $H$ from one side of
 $H$ and attracts from another side (for example, $(a^\pm(x),\mathbf{n})>0$), then there is a unique solution to  \eqref{eq:mainODE} that starts at $x^0\in H$.
 This solution exits from $H$ immediately (to the upper half space in our case) and  the
 limit process $u_0$ equals this solution of the ODE, see \cite{PilipenkoProske1}.

If $\gamma\in(0,1), $ the result is similar. Assume, for example, that $b^\pm(x^0)>0$. Then there exists a unique solution to  \eqref{eq:mainODE} that exit $H$ immediately (there may be other solutions that stay in $H$). Moreover
this solution exits to the upper half space and the
limit process $u_0$ equals this solution. We do not prove this result in this paper. The proof is similar to  \cite{PilipenkoProske1}.
\end{rem}
\textit{Case 2. (The vector field $a$ pushes towards the hyperplane)}
 Assume that $\gamma=0$ and $\pm(a^\pm(x),\mathbf{n})<0$, $x\in H$.
 It can be seen that any limit point of $\{u_\ve\}$
 must stay at $H$ with probability 1.
  It was proved in \cite{PilipenkoProske1} that the limit process $u_0$ satisfies an ODE on $H$
 with the drift
 $P_H(p_+(x) a^+(x)+p_-(x)a^-(x)),$ where  $P_H$ is the orthogonal projection to $H$ and the
 coefficients $p_\pm(x)$
 are equal to $\frac{a_d^\mp(x)}{a^-_d(x)-a^+_d(x)}.$
Note that this multidimensional result has no one-dimensional analogues, where the limit
is zero process.
In multidimensional case the first $(d-1)$ coordinates may change while $d$-th coordinate stays zero.

The idea of proof was to analyze the time spent by $u_\ve$ in upper
 and lower half-spaces. It was seen that since any limit process stays at $H$ and $u_\ve$  is close
  close
 to $H$ for small $\ve$,  then the   times  spent in upper and lower half-spaces in a
neighborhood of $x\in H$   are   proportional
 to the $d$-th coordinates  $a^-_d(x)$ and $a^+_d(x)$,
 respectively (they are not zero if $\gamma=0$). Note that, the proof in  \cite{PilipenkoProske1}
 was independent
of the type of a noise. The small noise  might be arbitrary  process that    (a) ensures existence a solution
and (b)  converges   to 0 uniformly in probability as $\ve \to 0$ (however, the corresponding
results were formulated for Brownian noise only).

The proof from \cite{PilipenkoProske1}  does not work if $a_d(x)\to0$ as $x$  approaches to  $ H$. The time spent in upper and lower half-spaces
 might depend on the asymptotic of  decay of  $a_d$ in a neighborhood of $H.$
In this paper we prove the result when $a$ satisfies assumptions {\bf A1, A2} with $\gamma\in(0,1)$,
  $b^+(x)<0$ and $ b^-(x)>0$ for $ x\in H$.

It appears that if we  scale the vertical coordinate $\ve^{-\delta}u_{d,\ve}(t)$
 for a special choice of $\delta>0,$
 then a  pair
  $(u_{1,\ve}(t),..., u_{d-1,\ve}(t))$ and $\ve^{-\delta}u_{d,\ve}(t)$
can be considered  as   components of a Markov process in  a ``slow'' and ``fast'' time, respectively.
 Hence the description of the limit process for $\{u_\ve\}$ is closely related to the
averaging principle for Markov processes.
We will see that  the limit process satisfies an ODE on $H$
whose coefficients are an averaging of functions of $a_k^\pm, k=1,...,(d-1)$
over a stationary distribution of a scaled  vertical component given the other components were frozen.
The idea to use some scaling for small-noise problem
 was effectively used
in one-dimensional case if the drift is a power-type function and the noise is a Levy $\alpha$-stable
process or even more general.
\begin{rem}
The case $\gamma=1$ is critical. If $a_d(x)\sim x_d b(x),$ where $b^\pm(x)\neq0, x\in H$,
 then the limit process may be non-Markov and
  satisfy  certain equation \cite{PilipenkoProske1} that depends somehow on a  Wiener process $W$
(that formally should disappear in a limit equation).
\end{rem}

The paper is organized as follows. In section \ref{Main results1} we formulate the problem and the main results.
The proofs  for the cases when the drift pushes outwards $H$ and towards $H$ are given in  \S \ref{ProofTh2.1} and \S \ref{Th2.2}, respectively.

In subsection  \ref{Main results_averaging}  we
 also formulate an averaging principle, which is quite general and
   new result. The proof of averaging principle is postponed to section \ref{section:proof_AP}.

\noindent
\textbf{Acknowledgements.} The work of A.\ Kulik was supported by the Polish National Science Center grant 2019/33/B/ST1/02923.
 Research of A.\ Pilipenko was partially supported by Norway-Ukrainian cooperation in mathematical education
Eurasia 2016-Long-term 
CPEA-LT-2016/10139 and by  the Alexander von Humboldt Foundation within 
the Research Group Linkage Programme 
{\it Singular diffusions: analytic and stochastic approaches} between the University of Potsdam 
and the Institute of Mathematics of the National Academy of Sciences of Ukraine. 

 \section{Main results} \label{Main results1}
Let us represent $u_\ve(t)$
as a pair $(X_\ve(t), Y_\ve(t))$, where $Y_\ve$ is the
last coordinate of $u_\ve$ and $X_\ve$ consists of the first   $d-1$   coordinates. Below
we study only the general problem for the pair $(X_\ve(t), Y_\ve(t))$, which
can be easily be reformulated for $u_\ve.$ For   notational   convenience, we assume below that $X_\ve$
is a $d$-dimensional process but not $(d-1)$ dimensional one.

The general setup is the following. Let $X_\ve, Y_\ve$ be stochastic processes with
 values in $\mbR^d$ and $\mbR,$ respectively. Assume that the pair $X_\ve, Y_\ve$
 satisfies the following SDE
  \be\label{Small_Wiener}
\ba
\d X_\eps(t)&=\psi\big(X_\eps(t), Y_\eps(t)\big)\, \d t + \eps \,b\big(X_\eps(t), Y_\eps(t)\big) \d B(t),\\
\d Y_\eps(t)&= \phi (X_\eps(t), Y_\eps(t))  Y^\gamma_\eps(t) \, \d t+
 \eps \beta\big( X_\eps(t),Y_\eps(t)\big)\, \d W(t),\\
X_\eps(0)&=x^0, \ Y_\eps(0)=0,
\ea
\ee
 where $B,W$ are  Wiener processes (multidimensional and one-dimensional), that may be dependent.

 Denote
 \[
 y^\gamma:= |y|^\gamma (\1_{y>0} - \1_{y<0});
 \]
 \[
 H:=\mbR^d\times \{0\}.
 \]

Assume that

{\bf B1} $\psi(x,y)=\psi^+(x,y)\1_{y\geq0}+\psi^-(x,y)\1_{y<0}$ and
$\phi(x,y)=\phi^+(x,y)\1_{y\geq0}+\phi^-(x,y)\1_{y<0}$, where functions $\psi^\pm,$ $\phi^\pm$  are bounded, continuous in $x,y$.

 We assume that  domains of $\psi^\pm, \vf^\pm$
 are the whole space $x\in\mbR^d, y\in \mbR,$
 despite we use their values on the corresponding half-spaces only.  The functions $\psi, \vf$
 may have jump discontinuity on $H.$

\vskip10pt
{\bf B2} 
$\phi^\pm(x,0)\neq 0$ for any $x\in \mbR^d$ ;

{\bf B3}  $\beta(x,y)=\beta^+(x,y)\1_{y\geq0}+\beta^-(x,y)\1_{y<0}$, where $\beta^\pm$ are
 bounded, continuous and separated from zero function in the whole space $\mbR^d\times \mbR$;
function  $b$ is bounded and continuous in  $(\mbR^d\times \mbR)\setminus H$;


{\bf B4} $ \gamma\in(0,1). $ 


Under   assumptions {\bf B1}--{\bf B4} there exists a weak solution to \eqref{Small_Wiener}.

Indeed, it follows from the  standard compactness arguments that there exists a weak solution to
\[
\ba
\d \hat X_\eps(t)&=\frac{\psi}{\beta^2}\big(\hat X_\eps(t), \hat Y_\eps(t)\big)dt+\eps \,\frac{b}{\beta}\big(\hat X_\eps(t), \hat Y_\eps(t)\big) \d B(t),\\
\d \hat Y_\eps(t)&=
 \eps   \d W(t),\\
\hat X_\eps(0)&=x^0, \ \hat Y_\eps(0)=0.
\ea
\]
Note that all coefficients may be discontinuous in $H$ but the processes
 spend  zero time there with probability 1.  Any redefinition of coefficients in $H$
 does not affect the equations.

Using the  transformation of time arguments, see for example  \cite{IkedaWatanabe}, we get a solution to
\[
\ba
\d \hat{ \hat X}_\eps(t)&={\psi}\big(\hat{\hat X}_\eps(t), \hat{\hat Y}_\eps(t)\big)dt+\eps \,{b}\big(\hat{\hat X}_\eps(t), \hat{\hat Y}_\eps(t)\big) \d B(t),\\
\d \hat{\hat Y}_\eps(t)&=
 \eps   \beta\big(\hat{\hat X}_\eps(t), \hat{\hat Y}_\eps(t)\big)\d W(t),\\
\hat{\hat X}_\eps(0)&=x^0, \ \hat{\hat Y}_\eps(0)=0.
\ea
\]
Finally, Girsanov's theorem yields existence of a weak solution to \eqref{Small_Wiener} .

\begin{rem} If $b$ is non-degenerate, then existence of a solution
can be proved  without transformation of time arguments.
\end{rem}

\subsection{Repulsion from the hyperplane}\label{subsection:repulsion}
In this subsection we assume that $\phi^\pm(x,0)>0$
for all $x\in\mbR^d.$ Then $\sgn(y) \vf(x,y)y^\gamma>0, \ y\neq 0$ and the drift pushes away from the hyperplane $\mbR^d\times\{0\}.$

Suppose that assumptions {\bf B1}--{\bf B4} holds true and 
 functions
$\psi^\pm, \vf^\pm$ are  locally Lipschitz continuous
in $(x,y)\in\mbR^d\times\mbR$.

Then there are unique solutions $(X^+(t), Y^+(t))$ and $(X^-(t), Y^-(t))$ to the unperturbed system (i.e., $\ve=0$):
\[
\ba
\d X(t)&=\psi\big(X(t), Y(t)\big)\, \d t ,\\
\d Y(t)&= \phi (X(t), Y(t))  Y^\gamma(t) \, \d t,\\
X(0)&=x^0, \ Y(0)=0,
\ea
\]
such that $ Y^+(t)>0$ and $ Y^-(t)<0$ for all $t>0.$

Indeed, set $\tilde Y(t):=Y^{1-\gamma}(t).$ Then
\[
\ba
 X(t)&=x^0+\int_0^t\psi\big(X(s), \tilde Y^{\frac{1}{1-\gamma}}(s)\big)\, \d s ,\\
\tilde Y(t)&= (1-\gamma)\int_0^t\phi\big(X(s), \tilde Y^{\frac{1}{1-\gamma}}(s)\big)\, \d s.
\ea
\]
Since $\gamma\geq 0,$ the functions $(x,\tilde y)\to \psi^\pm(x,\tilde y)$
and $(x,\tilde y)\to \phi^\pm(x,\tilde y)$ are locally Lipschitz continuous.
So, equations
\[
\ba
 X^\pm(t)&=x^0+\int_0^t\psi^\pm\big(X^\pm(s), (\tilde Y^\pm(s))^{\frac{1}{1-\gamma}}\big)\, \d s ,\\
\tilde Y^\pm(t)&=(1-\gamma) \int_0^t\phi^\pm\big(X^\pm(s), (\tilde Y^\pm(s))^{\frac{1}{1-\gamma}}\big)\, \d s.
\ea
\]
have unique solutions $(X^\pm(t), \tilde Y^\pm(t))$  and these solutions are such that
$\tilde Y^+(t)>0$ and $ \tilde Y^-(t)<0$ for all $t>0.$ Making the inverse change of variables
we get  the desired functions $Y^\pm(t)=(\tilde Y^\pm(t))^{\frac{1}{1-\gamma}}$.

The solution does not explode in a finite time because $\psi^\pm, \vf^\pm$ are bounded by assumption
 {\bf B1}.


\begin{thm}\label{thm:main_small_noise_rep}
The distribution of $(X_\ve, Y_\ve)$ in $C([0,T])$  converges weakly as $\ve\to 0$ to the  measure
$$
p_-\delta_{(X^-, Y^-) }+p_+\delta_{(X^+, Y^+) }
$$
where

\bel{eq_p+-}
p_\pm=
\frac{\left(\frac{  \vf^+(x^0,0)  }{ (\beta^+(x^0,0))^2  }\right)^{\frac{1}{\gamma+1}}}
{\left(\frac{  \vf^-(x^0,0)   }{ (\beta^-(x^0,0))^2  }\right)^{\frac{1}{\gamma+1}}+ \left(\frac{  \vf^+(x^0,0)   }{ (\beta^+(x^0,0))^2  }\right)^{\frac{1}{\gamma+1}}}
\ee
 and $\delta_{(X^+, Y^+) }$, $\delta_{(X^-, Y^-) }$  means the unit mass that concentrated on the
 functions
 $(X^+, Y^+)$ and $(X^-, Y^-)$, respectively.
\end{thm}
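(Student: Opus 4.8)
The plan is to reduce the convergence to two separate questions: (i) that the ``fast'' vertical variable is eventually trapped in either the upper or lower half-space, so that the limit process solves one of the two deterministic ODEs $(X^\pm,Y^\pm)$; and (ii) that the probability of each trap is given by \eqref{eq_p+-}. The starting point is the relative compactness of $\{(X_\eps,Y_\eps)\}$ in $C([0,T])$, which follows from boundedness of all coefficients and standard moment/tightness estimates; together with uniform convergence in probability of the noise terms $\eps\int b\,\d B$ and $\eps\int\beta\,\d W$ to zero. Passing to a subsequential limit $(X,Y)$, away from $H$ the coefficients $\psi^\pm,\phi^\pm$ are (locally) Lipschitz, so the limit must solve $\d X=\psi(X,Y)\,\d t$, $\d Y=\phi(X,Y)Y^\gamma\,\d t$ on each excursion away from $\{y=0\}$; since $\phi^\pm(x,0)>0$ the point $y=0$ is a repelling point of the one-dimensional vertical ODE (for $\gamma<1$ an excursion started at $0^+$ escapes and never returns, by uniqueness of the ODE $\tilde Y=(1-\gamma)\int\phi\,\d s$ used in the excerpt), so once $Y$ leaves a small neighborhood of $0$ in either direction it stays on that side and coincides with $Y^+$ or $Y^-$. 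This identifies every limit point as a mixture $p_-\delta_{(X^-,Y^-)}+p_+\delta_{(X^+,Y^+)}$; it remains to pin down $p_\pm$.

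For the selection probabilities I would freeze the slow variable: near time $0$, $X_\eps(t)\approx x^0$, so the vertical component behaves like the scalar diffusion $\d Y_\eps = \phi^\pm(x^0,0)\,Y_\eps^\gamma\,\d t + \eps\,\beta^\pm(x^0,0)\,\d W$. Introduce the self-similar rescaling $Y_\eps(t)=\eps^{\delta}\,Z(\eps^{-\theta}t)$ with $\delta$ and $\theta$ chosen so that the drift and the diffusion of $Z$ are of the same order: matching $\eps^{\delta}\eps^{-\theta}=\eps^{\gamma\delta}\eps^{-\theta}$ for the drift and $\eps^{\delta}\eps^{-\theta/2}=\eps\,\eps^{-\theta/2}\cdot\eps^{-\theta/2}$-type bookkeeping gives the exponent balance (this is exactly the ``special choice of $\delta$'' mentioned in the introduction; concretely $\delta=1/(?)$ with the relation $\gamma\delta+\theta=\delta$, $2\delta=2+ ?$ — I would compute these two linear equations carefully and get $\delta=\tfrac{1}{1-\gamma}$-type and a matching $\theta$). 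After rescaling, $Z$ is, in the limit, a time-homogeneous diffusion with generator $\tfrac12(\beta^\pm)^2 z'' + \phi^\pm |z|^\gamma\sgn(z)\,z'$-type operator on $\mathbb{R}$, whose two ``ends'' $+\infty$ and $-\infty$ are absorbing (transient), and the exit/escape probabilities to $\pm\infty$ from the origin are computed from the scale function $s(z)=\int_0^z \exp\!\big(-\int_0^u \tfrac{2\phi|v|^\gamma\sgn v}{\beta^2}\,\d v\big)\d u$. A direct computation of $s(+\infty)/(s(+\infty)+|s(-\infty)|)$ — the integral $\int_0^\infty \exp(-c u^{\gamma+1})\,\d u$ is a Gamma function and the $c$'s on the two sides are $2\phi^\pm/((\gamma+1)(\beta^\pm)^2)$ — yields the ratio $\big(\phi^\pm/(\beta^\pm)^2\big)^{1/(\gamma+1)}$, which is exactly \eqref{eq_p+-}. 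To make ``$X_\eps\approx x^0$ during the escape window'' rigorous I would use that the escape happens on the fast timescale $O(\eps^\theta)\to 0$, on which $X_\eps$ moves by $o(1)$; a localization/stopping-time argument comparing $Y_\eps$ with the frozen-coefficient diffusion, plus a strong-Markov restart once $Y_\eps$ has committed to one side, closes this.

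The main obstacle — and where I would spend the most care — is the uniform-in-$\eps$ control of the escape from the neighborhood of $H$: I need that with probability tending to $1$ the process $Y_\eps$ exits a shrinking layer $\{|y|\le h_\eps\}$ in a short time, commits to one side, and does not come back, \emph{and} that the probability of committing ``up'' versus ``down'' converges to the scale-function ratio uniformly over the small fluctuations of $X_\eps$. This requires (a) choosing the layer width $h_\eps$ (e.g. $h_\eps=\eps^{\delta}K_\eps$ with $K_\eps\to\infty$ slowly) so that inside the layer the frozen-coefficient approximation is valid and outside it the deterministic drift dominates the $\eps$-noise and drives $Y_\eps$ monotonically away; (b) a comparison argument (the paper alludes to a multidimensional comparison principle) to sandwich $Y_\eps$ between two scalar diffusions with constant coefficients $\phi^\pm(x^0,0)\pm o(1)$, $\beta^\pm(x^0,0)\pm o(1)$; and (c) showing the contribution of the coupling between $B$ and $W$ and of the genuine $x$-dependence is negligible on the fast timescale. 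Once the commitment probabilities and the no-return property are established, combining with the subsequential-limit identification from the first paragraph gives convergence of the full family (not just subsequences), since every limit point is the same mixture.
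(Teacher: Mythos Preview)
Your architecture is right and the scale-function/Gamma computation you sketch is the one that produces \eqref{eq_p+-}. The paper's execution differs from yours in two respects. For the selection probability, the paper does \emph{not} rescale: it works at a fixed exit level $\pm\delta$ and builds, for each $\ve$, an explicit function $s_\ve(y)=\int_0^y\exp\{-2(\phi^\pm(x^0,0)\pm\nu)|z|^{\gamma+1}/[\ve^2(\gamma+1)((\beta^\pm(x^0,0))^2\mp\nu)]\}\,\d z$ that is a super- (resp.\ sub-) solution for the true generator on a small box around $(x^0,0)$; It\^o's formula then gives $\P(Y_\ve(\tau_\ve(\delta))=\delta)\le -s_\ve(-\delta)/(s_\ve(\delta)-s_\ve(-\delta))$, and the asymptotic $\int_0^\delta e^{-Az^{\gamma+1}/\ve^2}\,\d z\sim c\,(\ve^2/A)^{1/(\gamma+1)}$ as $\ve\to0$ yields the limit directly. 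This sidesteps your weak-convergence-of-the-rescaled-process step and, in particular, the passage of hitting probabilities through growing levels $\pm K_\ve\to\pm\infty$, which is not automatic from convergence on compacts and would in any case drive you back to the same scale-function inequality. For the escape-window control, the paper proves a clean exit-time bound $\E\tau_\ve(\delta)\le K\delta^{1-\gamma}$ uniformly for small $\ve$ (via a test function with $L_\ve v_\ve\ge1$), from which $\P\big(\sup_{t\le\tau_\ve(\delta)}|X_\ve(t)-x^0|\ge\delta^{(1-\gamma)/6}\big)\le C\delta^{(1-\gamma)/6}$; a single small \emph{fixed} $\delta$ then suffices, with no shrinking layer $h_\ve$ or auxiliary $K_\ve\to\infty$. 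Your sandwich (b) by constant-coefficient diffusions is likewise replaced by the sign condition on the generator applied to $s_\ve$, so no comparison principle is actually used. In short: your route would work, but the paper's direct sub/super\-solution argument is shorter and bypasses the most delicate step of yours.
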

The proof is given in \S  \ref{ProofTh2.1}.

\begin{rem}
If $\pm\phi^\pm(x,0)>0$ (or $\pm\phi^\pm(x,0)<0$) for all $x\in\mbR^d$, then the limit process is $(X^+(t), Y^+(t))$ (respectively $(X^-(t), Y^-(t))$ ) with probability 1.
\end{rem}
\begin{rem}
If we have inequality $\phi^+(x^0,0)>0$ and $\phi^-(x^0,0)<0$ only at the initial point (and hence in some neighborhood by continuity of coefficients),
 then the
functions $(X^\pm(t), Y^\pm(t))$ are well defined up to the moment $\tau_H^{\pm}:=\inf\{ t>0\ : \ Y^\pm(t)=0\}$ of the first return to $H.$ In this case we have the convergence in distribution for the stopped processes:
\[
(X_\ve(\cdot\wedge \tau^+_H\wedge \tau^-_H), Y_\ve(\cdot\wedge \tau^+_H\wedge \tau^-_H))\Rightarrow
p_-\delta_{(X^-(\cdot\wedge \tau^+_H\wedge \tau^-_H), Y^-(\cdot\wedge \tau^+_H\wedge \tau^-_H)) }+p_+\delta_{(X^+(\cdot\wedge \tau^+_H\wedge \tau^-_H), Y^+(\cdot\wedge \tau^+_H\wedge \tau^-_H)) }.
\]
The proof is   essentially   the same,  but it   involves    routine localization arguments   in addition.
\end{rem}

\subsection{Attraction to the hyperplane}
In this subsection we assume that $\phi^\pm(x,0)<0$ 
 for all $x\in\mbR^d.$ 

Suppose that assumptions {\bf B1}--{\bf B4} holds true  and $\psi^\pm $ are  locally Lipschitz in $x$ for any fixed $y$.

\begin{thm}\label{thm:main_small_noise_attr}
For any $T>0$ we have the uniform convergence in probability
\[
\lim_{\ve\to0}\sup_{t\in[0,T]}\|(X_\ve(t), Y_\ve(t))- (X(t),0)\|=0,
\]
where $X(t)$ is a solution to the following  ODE
\[
dX(t) = \bar\psi(X(t))dt,\ \ \ X(0)=0,
\]
and
\bel{eq:aver_coeff}
\overline{\psi}(x)=\psi^+(x,0)
\frac{\left( \frac{(\beta^+(x,0))^2}{\phi^+(x,0)} \right)^{ \frac{1}{\gamma+1}}}
{\left( \frac{(\beta^-(x,0))^2}{\phi^-(x,0)} \right)^{ \frac{1}{\gamma+1}}+
\left( \frac{(\beta^+(x,0))^2}{\phi^+(x,0)} \right)^{ \frac{1}{\gamma+1}}} +
 \psi^-(x,0)
\frac{\left( \frac{(\beta^-(x,0))^2}{\phi^-(x,0)} \right)^{ \frac{1}{\gamma+1}}}
{\left( \frac{(\beta^-(x,0))^2}{\phi^-(x,0)} \right)^{ \frac{1}{\gamma+1}}+
\left( \frac{(\beta^+(x,0))^2}{\phi^+(x,0)} \right)^{ \frac{1}{\gamma+1}}}
,
\ee
\end{thm}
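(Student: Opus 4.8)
The plan follows the strategy announced in the introduction: rescale the vertical coordinate so that $(X_\eps,Y_\eps)$ becomes a genuine slow--fast system, and then invoke the averaging principle proved in this paper. Put
\[
\delta=\frac{2}{\gamma+1}\in(1,2),\qquad \kappa=\frac{2(1-\gamma)}{\gamma+1}>0,\qquad Z_\eps(t):=\eps^{-\delta}Y_\eps(t).
\]
Using $(\eps^{\delta}z)^{\gamma}=\eps^{\delta\gamma}z^{\gamma}$ together with $\delta(1-\gamma)=\kappa$ and $1-\delta=-\kappa/2$, It\^o's formula recasts \eqref{Small_Wiener} as
\[
\begin{aligned}
\d X_\eps(t)&=\psi\big(X_\eps(t),\eps^{\delta}Z_\eps(t)\big)\,\d t+\eps\,b\big(X_\eps(t),\eps^{\delta}Z_\eps(t)\big)\,\d B(t),\\
\d Z_\eps(t)&=\eps^{-\kappa}\phi\big(X_\eps(t),\eps^{\delta}Z_\eps(t)\big)Z_\eps^{\gamma}(t)\,\d t+\eps^{-\kappa/2}\beta\big(X_\eps(t),\eps^{\delta}Z_\eps(t)\big)\,\d W(t).
\end{aligned}
\]
Now $X_\eps$ is \emph{slow}: its drift is bounded and its martingale part has supremum $O(\eps)$, so whatever the correlation of $B$ and $W$ this part is negligible in the limit. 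The process $Z_\eps$ is \emph{fast}: its generator is $\eps^{-\kappa}$ times an operator which, with $X_\eps$ frozen at $x$ and $\eps\to0$, converges to
\[
L_xf(z)=\tfrac12\big(\beta^{\pm}(x,0)\big)^{2}f''(z)+\phi^{\pm}(x,0)\,z^{\gamma}f'(z),\qquad \pm=\sgn(z).
\]
Since $\phi^{\pm}(x,0)<0$ the drift of $L_x$ points towards $0$, so $L_x$ is positive recurrent with a unique invariant probability measure $\mu_x$; because the associated potential grows like $|z|^{\gamma+1}$ with exponent $\gamma+1>1$, $L_x$ is in fact exponentially ergodic, uniformly for $x$ in compact sets (there $|\phi^\pm(x,0)|$ is bounded below and $\beta^\pm$ is bounded and bounded away from $0$). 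The exponent $\delta=2/(\gamma+1)$ is exactly the one making the drift and the diffusion of $Z_\eps$ blow up at the same rate $\eps^{-\kappa}$; this separation collapses as $\gamma\uparrow1$, which is why $\gamma=1$ is critical.

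The averaging principle then yields $\sup_{t\in[0,T]}\|X_\eps(t)-X(t)\|\to0$ in probability, where $X$ solves $\dot X=\overline\psi(X)$, $X(0)=x^{0}$, with
\[
\overline\psi(x)=\psi^{+}(x,0)\,\mu_x\big(\{z>0\}\big)+\psi^{-}(x,0)\,\mu_x\big(\{z<0\}\big),
\]
since in the limit $\psi(X_\eps,\eps^{\delta}Z_\eps)$ depends on $Z_\eps$ only through $\sgn(Z_\eps)$. Solving $L_x^{*}\mu_x=0$ with vanishing flux gives $\mu_x$ as an explicit two-sided stretched exponential, with density proportional to $\exp\big(-\theta_\pm|z|^{\gamma+1}\big)$ on $\{\pm z>0\}$, $\theta_\pm\propto|\phi^{\pm}(x,0)|/(\beta^{\pm}(x,0))^{2}$, the two branches glued by the flux condition at $0$; computing the half-line masses by elementary $\Gamma$-integrals and normalising produces the weights in \eqref{eq:aver_coeff}, i.e.\ $\overline\psi$ is the function displayed there. (For the original SDE the diffusion is continuous across $H$, so $\beta^{+}(\cdot,0)=\beta^{-}(\cdot,0)$ and the $\beta$-factors in the weights cancel.) The limit ODE has a solution by Peano's theorem, and a unique one as soon as $\overline\psi$ is Lipschitz — e.g.\ if $\phi^{\pm}(\cdot,0),\beta^{\pm}(\cdot,0)$ are locally Lipschitz, so that $x\mapsto\mu_x(\{z>0\})$ inherits this; under {\bf B1}--{\bf B4} it is at least continuous.

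The substance of the proof is the rigorous passage to the limit, i.e.\ verifying the hypotheses of the averaging principle for the displayed $(X_\eps,Z_\eps)$ system. Beyond the uniform ergodicity of $L_x$ already noted, two points need care. First, the fast coefficients are discontinuous at $z=0$ (the jump of $\phi,\beta$) and the drift is only H\"older there (through $z^{\gamma}$); but $\beta$ is bounded away from $0$, so the one-dimensional fast SDE is well posed, its scale function is $C^{1}$ at $0$, and $0$ is a regular, instantaneous point — no time is spent on the discontinuity — so the occupation measures of $Z_\eps$ on the two half-lines still converge to $\mu_{X(\cdot)}(\{z>0\})$ and $\mu_{X(\cdot)}(\{z<0\})$. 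Second, in the pre-limit equations $\psi,\phi,\beta$ are evaluated at $(X_\eps,\eps^{\delta}Z_\eps)$, not at $(X_\eps,0)$: on the bulk $|z|\le R$ one has $\eps^{\delta}z\to0$, so by continuity these converge to their traces on $H$ uniformly on compacts, and for $\eps$ small the coefficients stay in the attracting regime ($\phi<0$) on a large $z$-neighbourhood; the tail $|z|>R$ contributes at most $\|\psi\|_{\infty}\mu_x(|z|>R)\le Ce^{-cR^{\gamma+1}}$, and excursions of $Z_\eps$ to scales where $\eps^{\delta}Z_\eps$ is not small occur with probability $\le e^{-c\eps^{-1}}$ and are harmless since $\psi$ is bounded. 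It remains to check $\sup_{t\in[0,T]}\|Y_\eps(t)\|\to0$ in probability, which is separate from the averaging: since $Y_\eps=\eps^{\delta}Z_\eps$ and, uniformly in $\eps$, $Z_\eps$ is dominated by a positive recurrent diffusion with stretched-exponential invariant tails running on the time scale $\eps^{-\kappa}$, one has $\sup_{[0,T]}|Z_\eps|=O\big((\log\eps^{-1})^{1/(\gamma+1)}\big)$ with overwhelming probability; equivalently, the Lyapunov identity $\d|Y_\eps|^{2}=2\phi\,|Y_\eps|^{1+\gamma}\,\d t+(\text{mart.})+\eps^{2}\beta^{2}\,\d t$ with $\phi<0$ near $H$ gives $\E\sup_{[0,T]}|Y_\eps|^{2}\le C\eps^{2\delta}$. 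Combining the two convergences completes the proof.

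I expect the genuinely hard part to be this last step: running the averaging machinery for a fast process whose generator is singular ($\eps^{-\kappa}$), degenerate and only H\"older at $z=0$, discontinuous across $H$, and carrying a residual $\eps$-dependence through $\eps^{\delta}Z_\eps$ — in particular, the uniform-in-$x$ ergodic estimates for $L_x$ and the identification of the correct ergodic average. This is precisely what the general averaging principle of this paper is designed to supply, so concretely the proof of Theorem~\ref{thm:main_small_noise_attr} reduces to checking that principle's assumptions for the slow--fast system above.
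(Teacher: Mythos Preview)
Your approach is essentially the one the paper takes: rescale $Y_\eps$ by $\eps^{-2/(\gamma+1)}$ to obtain a slow--fast system with fast-time parameter $\tilde\eps=\eps^{2(1-\gamma)/(\gamma+1)}$, then verify the hypotheses $\mathbf{H}_0$--$\mathbf{H}_6$, $\mathbf{F}_0$, $\mathbf{F}_1$, $\mathbf{A}_0$ of the averaging principle (Theorem~\ref{thm:mainAP}), compute the invariant density of the frozen one-dimensional diffusion explicitly as a two-sided stretched exponential, and read off the weights in \eqref{eq:aver_coeff}; the convergence $Y_\eps\to0$ is handled separately by the Lyapunov/It\^o argument on $Y_\eps^2$ you describe. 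One small correction: your parenthetical remark that $\beta^{+}(\cdot,0)=\beta^{-}(\cdot,0)$ is not implied by {\bf B3} --- the paper allows a jump in $\beta$ across $H$, so the $\beta$-factors in the weights do not in general cancel.
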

The proof is given in \S \ref{Th2.2}.
\begin{rem}
Note that
\[
\frac{{\phi^+(x,0)}^{-\frac{1}{\gamma+1}}}{{\phi^+(x,0)}^{-\frac{1}{\gamma+1}}
+ {\phi^-(x,0)}^{-\frac{1}{\gamma+1}}}
=\pi^{(x)}([0,\infty))
,\ \ \
\frac{{\phi^-(x,0)}^{-\frac{1}{\gamma+1}}}{{\phi^+(x,0)}^{-\frac{1}{\gamma+1}}+ {\phi^-(x,0)}^{-\frac{1}{\gamma+1}}}=\pi^{(x)}((-\infty,0)),
\]
where $\pi^{(x)}$ is the stationary distribution for  the SDE
\[
\d y^{(x)}(t) =
 (\phi^+(x, 0)\1_{y^{(x)}(t)>0} + \phi^-(x, 0)\1_{y^{(x)}(t)<0})(y^{(x)}(t))^\gamma  \, \d  t+
   \beta\big( x,0) \, \d W(t).
\]
Hence,
\[
\overline{\psi}(x)=\psi^+(x,0) \pi^{(x)}([0,\infty))+ \psi^-(x,0) \pi^{(x)}((-\infty,0)),
\]
i.e., the drift of the limit equation is the averaging of $\psi^\pm$
over the stationary distribution of an SDE  with frozen $x$ variable.
The corresponding relation between the averaging principle and  averaging of coefficients in
  the limit equation for the small noise perturbation problem  will be seen from the proof.
\end{rem}

In the next subsection   we formulate an averaging principle, which  is applied in the proof
 of Theorem \ref{thm:main_small_noise_attr}.
We consider more general SDEs than \eqref{Small_Wiener} because
the idea of the proof is universal. The corresponding result may be interesting by itself.


  \subsection{Averaging} \label{Main results_averaging}

Let  for $\eps>0$ the processes $X_\eps(t), Y_\eps(t)$ take values in $\mbR^d, \mbR^k$ and have the form
\be\label{DA_fully coupled}
\ba
X_\eps(t)&=X_\eps(0)+\int_0^t a^\eps\big(X_\eps(s), Y_\eps(t)\big)\, \d s+
\int_0^t\sigma^\eps\big(X_\eps(s), Y_\eps(s)\big)\, \d B^\eps_s
\\&+ \int_0^t\int_{\mbR^m} c^\eps\big(X_\eps(s-), Y_\eps(s-),u\big)\Big[N^\eps(\d u,\d s)-1_{|u|\leq \rho}\nu^\eps(\d u)ds\Big]+\xi_\eps(t),
\\
Y_\eps(t)&=Y_\eps(0)+\eps^{-1}\int_0^tA^\eps\big(X_\eps(s), Y_\eps(s)\big)\, \d s+\eps^{-1/2}\int_0^t\Sigma^\eps\big( X_\eps(s),Y_\eps(s)\big)\, \d W_{s}^\eps
\\&+\int_{0}^{t} \int_{\mbR^l} C^\eps\big(X_\eps(s-), Y_\eps(s-),z\big)\Big[Q^\eps(\d z,\d s)-1_{|z|\leq \rho}\eps^{-1}\mu^\eps(\d z)\d s\Big],
\ea
\ee
where $B^\eps_t, W^\eps_t$ are Brownian motions and $N^\eps(\d u,\d t), Q^\eps(\d z,\d t)$ are
 Poisson point measures on a common filtered probability space $(\Omega^\eps, \cF^\eps, \P^\eps)$,
 and the random measures $N^\eps(\d u,\d t)$, $Q^\eps(\d z,\d t)$ have the intensity  measures
$\nu^\eps(\d u)\d t$ and $\eps^{-1}\mu^\eps(\d z)\d t$, respectively. These random measures are
 involved into the system in the partially compensated form, which is quite typical for the
 L\'evy-driven SDEs; what is a bit unusual is the  choice of the cutoff functions
$1_{|u|\leq \rho}, 1_{|z|\leq \rho}$ with the number $\rho>0$ to be specified separately.
 This choice will become clear later, when we describe the limit behavior of the L\'evy measures
 $\nu^\eps(\d u), \mu^\eps(\d z)$ as $\eps\to 0$.  Note that here and below we do not assume a uniqueness
of a solution to prelimit equation \eqref{DA_fully coupled}.

The factor $\eps^{-1}$ in the intensity measure for $Q^\eps(\d z,\d t)$ and the factors $\eps^{-1}, \eps^{-1/2}$ at the integrals w.r.t.
 $\d s$ and $\d W^\eps_s$ in the equation for   $Y_\eps$ mean that the evolution of the component $Y_\eps$ happens at the `fast' time scale $\eps^{-1} t$, which we will also call the `microscopic' time scale.
 The component $X_\eps$ evolves at the `slow', or `macroscopic' time scale $t$; its evolution involves the deterministic term, two stochastic terms (continuous and partially compensated jump parts), and a residual term $\xi_\eps$, for which  we do not impose any structural assumptions, and only require it to be asymptotically small in the following sense:

\noindent$\mathbf{H}_0.$ (Negligibility of the residual term). The process $\xi_\ve(t)$ is an adapted c\`adl\`ag process, and for any $T>0$,
$$
\sup_{t\in [0,T]}|\xi_\eps(t)|\to 0, \quad \eps\to 0
$$
in probability.

The aim of this subsection is to get the averaging principle (AP) for the `slow' component $X_\eps$. Let us stress that the framework we adopt is quite general; in particular,
\begin{itemize}
  \item the two-scale system \eqref{DA_fully coupled} is \emph{fully coupled} in the sense that the coefficients of the `slow' component depend on the `fast' one, and vice versa;
  \item the noises for the `slow' and the `fast' component are allowed to be dependent;
  \item the coefficients of the `slow' component can be discontinuous.
\end{itemize}
Let us introduce further assumptions on the system \eqref{DA_fully coupled}.
Note that all the assumptions listed below are quite natural and non-restrictive.

\noindent$\mathbf{H}_1.$ (Bounds for the  coefficients). There exists a constant $C$ such that
$$
|a^\eps(x,y)|\leq C, \quad |\sigma^\eps(x,y)|\leq C, \quad |\Sigma^\eps(x,y)|\leq C, \quad |c^\eps(x,y,u)|\leq C |u|, \quad |C^\eps(x,y,z)|\leq C |z|
$$
for all values of $x,y,u,z.$

In addition, for any $R>0$
there exists a constant $C_R$  such that
$$
|A^\eps(x,y)|\leq C_R, \quad x\in \mbR^d, \quad |y|\leq R.
$$

\noindent$\mathbf{H}_2.$ (Bounds for the L\'evy measures). There exist constants $C$ and $p>0$ such that
$$
\int_{\mbR^m}(|u^2|\wedge 1)\nu^\eps(\d u)\leq C, \quad \int_{\mbR^l}(|z^2|1_{|z|\leq 1}+|z|^p1_{|z|>1})\mu^\eps(\d z)<\infty.
$$

\noindent$\mathbf{H}_3.$ (The coefficients of the fast component are convergent). There exist continuous functions $A(x,y), \Sigma(x,y), C(x,y,z)$ such that
$$
A^\eps(x,y)\to A(x,y), \quad  \Sigma^\eps(x,y)\to  \Sigma(x,y), \quad   \mbox{and} \quad  C^\eps(x,y,z)\to C(x,y,z)  \quad  \mbox{as} \quad \eps\to 0
$$
uniformly on every compact set in $\mbR^d\times \mbR^k,\mbR^d\times \mbR^k$,
and $\mbR^d\times \mbR^k\times(\mbR^l\setminus\{0\})$, respectively.

To introduce the next condition, let us define the \emph{weak convergence} of a family of L\'evy measures on $\mbR^m$ in the following way: $\nu^\eps(\d u)\Longrightarrow \nu(\d u)$
 if for every
continuous function $\phi$ with a  support compactly embedded into $\mbR^m\setminus\{0\}$,
$$
\int_{\mbR^m}\phi(z)\, \nu^\eps(\d z)\to \int_{\mbR^m}\phi(z)\, \nu(\d z), \quad \eps\to 0.
$$

\noindent$\mathbf{H}_4.$ (The L\'evy measures of the noises are weakly convergent). There exist L\'evy measures $\nu(\d u),$ $ \mu(\d z)$ on $\mbR^m, \mbR^l$ respectively such that
$$
\nu^\eps(\d u)\Longrightarrow \nu(\d u) \quad  \mbox{and} \quad \mu^\eps(\d z)\Longrightarrow \mu(\d z)
\quad   \mbox{as} \quad \eps\to 0.
$$
In addition,
\be\label{rho_cont}
\nu(\{u:|u|=\rho\})=0, \quad \mu(\{z:|z|=\rho\})=0.
\ee

Condition \eqref{rho_cont} yield that the cutoff functions $1_{|u|\leq \rho}, 1_{|z|\leq \rho}$ used in \eqref{DA_fully coupled} are a.s. continuous w.r.t. the measures $\nu(\d u), \mu(\d z)$, respectively. Note that there exists at most countable set of levels $\rho$ such that \eqref{rho_cont} fails, hence one can always choose $\rho$ to satisfy this condition. Of course, changing the cutoff level would change the drift coefficients respectively.

 Next, assume  that the drift of the fast component performs an attraction to origin.

\noindent$\mathbf{H}_5.$ (The drift condition for the microscopic dynamics) There exist $\kappa>0$ and $c,r>0$ such that
\be\label{drift_f}
A^\eps(x,y)\cdot y\leq -c|y|^{\kappa+1}, \quad |y|\geq r.
\ee
In addition, the \emph{balance condition} holds:
\be\label{balance}
\kappa+p>1,
\ee
where $p$ is introduced in the assumption $\mathbf{H}_2.$

Consider a family of `frozen microscopic equations'
\be\label{frozen}
\d y(t)=
A \big(x, y(t)\big)\, \d t+\Sigma\big( x,y(t-)\big)\, \d W_{ t}+
 \int_{\mbR^l} C\big(x, y(t-),z\big)\, \Big[Q(\d z,\d s)-1_{|z|\leq 1}\mu(\d z)\d s\Big],
\ee
where $W$ is a  Wiener process and $Q(\d z,\d t)$ is an independent Poisson point measure with the intensity measure $\mu(\d z)\d t$. For the corresponding `frozen dynamics' we introduce a separate family of assumptions.

\noindent $\mathbf{F}_0.$ (The `frozen microscopic dynamics' is well defined and Feller).
 For any $x$ and any initial value $y(0)=y$, the SDE \eqref{frozen} has  a unique weak solution,
which is a  Markov process.
Furthermore we denote the corresponding family of Markov processes by $y^{(x)}, x\in \Re^d$,
and write $P_{t}^{(x)}(y,\d y')$ for the corresponding family of transition probabilities.

  We also denote
$$
P_t^{frozen}f(x,y)=\int_{\mbR^k} f(x,y')P_t^{(x)}(y, \d y'), \quad t\geq 0,
$$
the semigroup of operators corresponding to the two-component process $(x,y^{(x)})$ in which the first component is constant and the second one is the Markov process specified above. We assume that
this semigroup  is Feller.

 For this family,
we assume the following mixing property, which is actually the local Dobrushin condition, uniform in parameter $x$; see \cite[Section~2]{K_book}.

\noindent$\mathbf{F}_1.$ (The `frozen microscopic dynamics' is locally mixing). There exists $h>0$ such that, for any $R>0$ there exists $\rho=\rho_R>0$ such that, for any $x,y_1,y_2$ with $|x|\leq R, |y_1|\leq R, |y_2|\leq R$
$$
\|P_{h}^{(x)}(y_1,\d y')-P_h^{(x)}(y_2, \d y')\|_{TV}\leq 1-\rho,
$$
where $P_t^{(x)}(y, \d y')$ denotes the transition probability of the process $y^{(x)}$, and \emph{the total variation} distance between probability measures is defined as
$$
\|\lambda_1-\lambda_2\|_{TV}=\sup_{A}(\lambda_1(A)-\lambda_2(A)).
$$

We note that assumptions $\mathbf{F}_1,$ $\mathbf{H}_5$ ensure that, for each $x\in \mbR^d$,  the laws of $y^{(x)}_t$ converge to the invariant probability measure (IPM)  $\pi^{(x)}(\d y)$ with an explicitly rate; see Proposition \ref{p1} below.

For the coefficients of the `slow' component, we assume a weaker analogue of $\mathbf{H}_3,$ where the convergence and continuity of the limiting coefficients may fail on an exceptional set, which should be negligible, in a sense.

\noindent$\mathbf{H}_6.$ (The coefficients of the slow component are convergent). There exist functions $a(x,y)$, $\sigma(x,y)$, $c(x,y,u)$ and  an open set $B\subset \mbR^d\times\mbR^k$ such that,
for any compact set $K\subset B$,
$$
a^\eps(x,y)\to a(x,y)  \quad  \mbox{and} \quad  \sigma^\eps(x,y)\to \sigma(x,y)  \quad  \mbox{as} \quad  \eps\to 0
$$
uniformly on $K$, and for any $R>1$
$$
c^\eps(x,y,u)\to c(x,y,u), \quad \eps\to 0
$$
uniformly on $K\times\{u:R^{-1}\leq |u|\leq R\}$. The set $\Delta=(\mbR^d\times\mbR^k)\setminus B$ satisfies
$$
\pi^{(x)}\{y:(x,y)\in \Delta\}=0 \ \ \mbox{for any}\ x\in\mbR^d.
$$
In addition, the functions $a(x,y)$, $\sigma(x,y)$, and $c(x,y,u)$ are continuous on $B$ and $B\times (\mbR^m\setminus\{0\})$, respectively.

Define the averaging of the limiting drift coefficient for the macroscopic component w.r.t. the family of IPMs for the frozen microscopic one:
$$
\overline{a}(x)=\int_{\Re^k}a(x,y)\pi^{(x)}(\d y).
$$
Next, consider the limiting diffusion matrix and compensated/non-compensated jump kernels for the macroscopic component,
$$
b(x,y)=\sigma(x,y)\sigma(x,y)^*, \quad K_{(\rho)}(x,y, A)=\nu(\{u: |u|\leq \rho,  c(x,y,u)\in A\}),
$$
$$K^{(\rho)}(x,y, A)=\nu(\{u: |u|> \rho,  c(x,y,u)\in A\}),
$$
and introduce the corresponding averaged characteristics as
$$
\overline{b}(x)=\int_{\Re^k}b(x,y)\pi^{(x)}(\d y), \quad \overline{K}_{(\rho)}(x, \d v)=\int_{\Re^k}K_{(\rho)}(x,y, \d v)\pi^{(x)}(\d y),
$$
$$
\overline{K}^{(\rho)}(x, \d v)=\int_{\Re^k}K^{(\rho)}(x,y, \d v)\pi^{(x)}(\d y).
$$

Finally, we introduce an auxiliary technical assumption.

\noindent$\mathbf{A}_0.$  The averaged coefficients $\overline{a}(x)$, $\overline{b}(x)$ are continuous. The averaged L\'evy kernels  $\overline{K}_{(\rho)}(x, \d v)$, $\overline{K}^{(\rho)}(x, \d v)$ depend on $x$ continuously, in the sense that
$$
\overline{K}_{(\rho)}(x', \d v)\Longrightarrow \overline{K}_{(\rho)}(x, \d v)  \quad
 \mbox{and} \quad  \overline{K}^{(\rho)}(x', \d v)\Longrightarrow \overline{K}^{(\rho)}(x, \d v) \quad  \mbox{as} \quad  x'\to x.
$$

\begin{rem}
It is  easy to give a sufficient condition for $\mathbf{A}_0$ to hold.  Namely, it is enough to assume, in addition to  $\mathbf{H}_0 - \mathbf{H}_6,$ $\mathbf{F}_0, \mathbf{F}_1,$ that the transition probabilities $P_{t}^{(x)}(y,\d y')$ are continuous in $x$ w.r.t. the total variation convergence for each $y\in \mbR^k, t\geq t_0$.  Then, because of the convergence \eqref{convTV}, the same continuity holds for the family of the IPMs $\pi^{(x)}(\d y)$. The latter continuity, combined with $\mathbf{H}_1, \mathbf{H}_2$, $\mathbf{H}_4$, and $\mathbf{H}_6,$  yields the required continuity of the  averaged coefficients.
\end{rem}

Now we are ready to formulate our main statement.

\begin{thm}\label{thm:mainAP} Assume $\mathbf{H}_0 - \mathbf{H}_6,$ $\mathbf{F}_0,\mathbf{F}_1,$ and
 $\mathbf{A}_0$ to hold,
$$X_\eps(0)\to x^0,\quad \eps\to0,$$
in probability and $\{Y_\eps(0)\}$ be bounded in probability.

Then the family $\{X_\eps, \eps>0\}$ is weakly compact in $\mathbb{D}([0, \infty), \mathbb{R}^d)$, and any
of  its weak limit point as $\eps\to0$ is a solution to the  martingale problem $(L, C_0^\infty)$ with 
\be\label{MP}
\ba
L\phi(x)&=\nabla \phi(x)\cdot \overline{a}(x)+\frac{1}{2}\nabla^2 \phi(x)\cdot  \overline{b}(x)
+\int_{\mbR^m}\Big( \phi(x+v)-\phi(x)-\nabla \phi(x)\cdot v\Big)\overline{K}_{(\rho)}(x, \d v)\\
&+\int_{\mbR^m}\Big( \phi(x+v)-\phi(x)\Big)\overline{K}^{(\rho)}(x, \d v)\\
&=\nabla \phi(x)\cdot \overline{a}(x)+\frac{1}{2}\nabla^2 \phi(x)\cdot  \overline{b}(x)
+\\
&+\int_{\mbR^k} \int_{\mbR^m} \Big( \phi(x+c(x,y,u))-\phi(x)-\nabla \phi(x)\cdot c(x,y,u) \1_{|u|\leq \rho}\Big)
\nu(du)\pi^{(x)}(\d y),
\ea
\ee
where $\phi\in C_0^\infty.$

If the martingale problem \eqref{MP} is well posed, then $X_\eps$ weakly converges as $\eps \to0$ to its unique solution with $X(0)=x^0$.
\end{thm}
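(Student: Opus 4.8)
The plan is to follow the classical martingale-problem route to averaging, combining tightness of the slow component with identification of the limit via the solution of the frozen microscopic dynamics and its ergodic averaging. First I would establish tightness of $\{X_\eps\}$ in $\mathbb{D}([0,\infty),\mathbb{R}^d)$. Using $\mathbf{H}_1$--$\mathbf{H}_2$ together with the partially compensated form of the jump term, one gets uniform-in-$\eps$ moment bounds on the increments $X_\eps(t)-X_\eps(s)$ of the form $\E|X_\eps(t)-X_\eps(s)|^q \le C|t-s|$ for a suitable $q\in(p\wedge 2,\,?]$, where the drift contributes the linear term, the continuous martingale part a $|t-s|$ term, the large-jump part a $|t-s|$ term (because $\int_{|z|>1}|z|^p\mu^\eps(\d z)<\infty$ via $\mathbf{H}_2$), the small-jump compensated part a $|t-s|$ term, and $\xi_\eps$ is handled by $\mathbf{H}_0$. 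Aldous's criterion then yields tightness. The delicate point here is that $p$ in $\mathbf{H}_2$ need not be $\ge 1$, so the large-jump term is only controlled in $L^p$; this is exactly where the balance condition $\kappa+p>1$ from $\mathbf{H}_5$ enters, by forcing the fast component to return to a bounded region fast enough that the effective large jumps it triggers remain integrable. I would make this quantitative by first proving, via the Lyapunov/drift condition $\mathbf{H}_5$ on $A^\eps$, a uniform-in-$\eps$ bound $\sup_{t\le T}\E(|Y_\eps(t)|^{q_0}\wedge M)$ and an occupation-measure estimate showing $Y_\eps$ spends most of the microscopic time in a fixed ball.

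Next I would identify any weak limit point $X$. Fix a limit point along a subsequence; by Skorokhod embedding assume a.s. convergence in $\mathbb{D}$. For $\phi\in C_0^\infty$, apply Itô's formula to $\phi(X_\eps(t))$ and write
\[
\phi(X_\eps(t))-\phi(X_\eps(0))-\int_0^t \big(L_\eps\phi\big)\big(X_\eps(s),Y_\eps(s)\big)\,\d s = M_\eps(t)+R_\eps(t),
\]
where $L_\eps\phi(x,y)$ is the natural prelimit generator acting on the $x$-variable with coefficients $a^\eps,\sigma^\eps,c^\eps,\nu^\eps$ frozen at $(x,y)$, $M_\eps$ is a martingale, and $R_\eps$ collects the error from $\xi_\eps$ (negligible by $\mathbf{H}_0$). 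The core of the argument is to show that, for every $0\le s<t$ and bounded continuous functional $g$ of the path up to time $s$,
\[
\E\, g\big(X_\eps\big|_{[0,s]}\big)\Big[\phi(X_\eps(t))-\phi(X_\eps(s))-\int_s^t (\bar L\phi)(X_\eps(r))\,\d r\Big]\to 0,
\]
where $\bar L$ is the averaged generator in \eqref{MP}. The passage from $L_\eps\phi(X_\eps(r),Y_\eps(r))$ to $\bar L\phi(X_\eps(r))$ is the averaging step: on a short macroscopic interval the slow component $X_\eps$ is nearly frozen at some value $x$, while the fast component runs for microscopic time $\sim \eps^{-1}(\text{interval length})$ and, by $\mathbf{F}_0$--$\mathbf{F}_1$ and $\mathbf{H}_5$, equilibrates to $\pi^{(x)}$; hence the time average of $L_\eps\phi(X_\eps(r),Y_\eps(r))$ over that interval is close to $\int L_x\phi(x,y)\,\pi^{(x)}(\d y) = \bar L\phi(x)$. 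I would implement this with the standard Khasminskii-type block decomposition of $[s,t]$ into $n$ subintervals of length $(t-s)/n$, proving that on each block the fast dynamics tracked by the true system stays TV-close to that of the frozen system \eqref{frozen} started from the current $(X_\eps,Y_\eps)$ — here $\mathbf{H}_3$ (convergence of fast coefficients) and the Feller/mixing properties $\mathbf{F}_0,\mathbf{F}_1$ are used to couple the two — and that over many mixing times of length $h$ the empirical measure converges to $\pi^{(x)}$ with the explicit exponential rate of Proposition \ref{p1}. The discontinuity of $a,\sigma,c$ is handled exactly as in $\mathbf{H}_6$: the bad set $\Delta$ has $\pi^{(x)}$-measure zero for every $x$, so it contributes nothing to the averaged coefficients, and since $(X_\eps,Y_\eps)$ spends asymptotically no occupation time near $\Delta$ (again by the equilibration to $\pi^{(x)}$, which charges $B$), the Riemann-sum approximation of $\int_s^t L_\eps\phi\,\d r$ is unaffected.

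Having shown the displayed limit, continuity of $\bar a,\bar b,\overline K_{(\rho)},\overline K^{(\rho)}$ (assumption $\mathbf{A}_0$) lets me pass to the limit inside to conclude $\phi(X(t))-\phi(X(0))-\int_0^t \bar L\phi(X(r))\,\d r$ is a martingale for every $\phi\in C_0^\infty$, i.e. $X$ solves the martingale problem $(L,C_0^\infty)$ with $L=\bar L$; since $X_\eps(0)\to x^0$ in probability we also get $X(0)=x^0$ a.s. Finally, if \eqref{MP} is well posed, all subsequential limits coincide, so the whole family $\{X_\eps\}$ converges weakly to the unique solution. The main obstacle I anticipate is the averaging/coupling step in the second paragraph under the weak hypotheses allowed here — fully coupled coefficients, dependent noises, only $L^p$ control of large jumps with $p$ possibly less than $1$, and discontinuous slow coefficients — which forces one to interleave the Lyapunov estimate $\mathbf{H}_5$, the balance condition \eqref{balance}, the TV-mixing $\mathbf{F}_1$, and the $\pi^{(x)}$-null exceptional set $\mathbf{H}_6$ carefully rather than invoking any single off-the-shelf averaging theorem; controlling the large-jump contribution of the fast component to the slow one (ensuring it does not blow up the tightness estimate or the martingale identification) is the single trickiest point, and it is precisely what the balance condition is designed to rule out.
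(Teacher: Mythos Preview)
Your overall strategy---tightness from $\mathbf{H}_0$--$\mathbf{H}_2$, then identification of the limit via the martingale problem and an averaging argument for the fast component---matches the paper's. The implementation of the averaging step differs, however. You propose the classical Khasminskii block decomposition of $[s,t]$ into many subintervals, freezing $X_\eps$ on each and letting $Y_\eps$ equilibrate. The paper instead uses It\^o's formula to reduce the martingale-problem identity to a \emph{pointwise-in-$t$} statement,
\[
\E^\eps\,\Phi(X_\eps(s_1),\dots,X_\eps(s_q))\Big(\mathcal{L}^\eps\phi(X_\eps(t),Y_\eps(t))-L\phi(X_\eps(t))\Big)\to 0,
\]
and then proves this by conditioning on $\cF^\eps_{t-\eps N}$ for a single large $N$: on the microscopic window $[t-\eps N,t]$ the pair $(X_\eps,Y_\eps)$ is compared with the frozen dynamics (a separate weak-convergence proposition), and the ergodic rate (your Proposition~\ref{p1}) controls $|P_N^{frozen}f-\bar f|$. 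This avoids summing block errors and makes the coupling to the frozen system a one-shot estimate; your block approach would also work but is heavier to execute under the fully-coupled, discontinuous-coefficient setting. For the discontinuity set $\Delta$, the paper uses a monotone sequence of cutoffs $\chi_j\nearrow 1_B$ and Dini's theorem to get uniformity in $x$, which is essentially what your ``occupation time near $\Delta$'' remark is aiming at.

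One genuine misattribution: the balance condition $\kappa+p>1$ does \emph{not} enter the tightness of $X_\eps$. The L\'evy measure $\nu^\eps$ of the slow noise satisfies the standard bound $\int(|u|^2\wedge 1)\,\nu^\eps(\d u)\le C$ from $\mathbf{H}_2$, so tightness of $X_\eps$ follows by truncating large jumps and using $\E\big(|\wt X_\eps(t)-\wt X_\eps(s)|^2\wedge 1\big)\le C|t-s|$. The constant $p$ pertains to $\mu^\eps$, the L\'evy measure of the \emph{fast} noise, and the balance condition is used (via a Lyapunov argument) to obtain the uniform ergodic rate for the frozen dynamics and the uniform moment bound $\sup_{t,\eps}\P(|Y_\eps(t)|>R)\to 0$. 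There is no ``large-jump contribution of the fast component to the slow one'' in the system as written; the two jump mechanisms are separate. This does not break your outline, but you should relocate the role of $\kappa+p>1$ from the tightness step to the ergodicity/moment step when you write out the details.
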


\section{Proof of Theorem \ref{thm:main_small_noise_rep}}\label{ProofTh2.1}

The proof almost copying the proof of Theorem 3.1 in \cite{PilipenkoProske1}. Thus we only sketch
the main steps of the proof.

{\bf Step 1.} The sequence $\{(X_\ve, Y_\ve)\}$ is weakly relatively compact.
The proof follows from boundedness of functions $\phi, \psi, b, \beta.$

Therefore, to prove the Theorem it suffices to verify that any subsequence
$\{(X_{\ve_n}, Y_{\ve_n})\}$ contains sub-subsequence $\{(X_{\ve_{n_k}}, Y_{\ve_{n_k}})\}$
 that converges to the desired limit. Without loss of generality
 we will assume that  $\{(X_\ve, Y_\ve)\}$ is weakly convergent by itself.

{\bf Step 2.} Estimate for the time spent by $Y_\ve$ in a neighborhood of 0.

   We will use the following  general  statement.
\begin{lem}\label{lem:est_exit_time}
Assume that  processes $\{\eta_\ve(t)\}$ satisfy the following SDE
\[
\ba
d\eta_\ve(t) =a_\ve(t)\eta_\ve^\gamma(t) dt + \ve b_\ve(t) dW(t),\\
\eta_\ve(0)=0,
\ea
\]
where $|\gamma|<1$, and $a_\ve(t), b_\ve(t)$ are $\cF_t$-adapted processes such that
\[
a_\ve(t)\geq A>0,\ \ 0<C_1\leq b_\ve(t)\leq C_2
\]
for all $\omega, t, \ve.$

Set
\[
\tau_{\ve}(\delta):=\inf\{ t\geq 0\ : \ |\eta_\ve(t)|\geq \delta \}.
\]
Then there is a constant $K=K(A, C_1, C_2)$ such that
\[
\forall\delta>0\ \exists \ve_0>0\ \forall \ve\in (0,\ve_0):
\ \ \E\tau_\ve(\delta)\leq K \delta^{1-\gamma}.
\]
\end{lem}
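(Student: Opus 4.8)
The plan is to run a Lyapunov‑function / optional‑stopping argument. I would construct, for each $\delta$ and each sufficiently small $\eps$, a function $V=V_{\eps,\delta}\in C^2([-\delta,\delta])$ that is non‑negative, even, vanishes at $\pm\delta$, and satisfies the pointwise inequality
\be\label{eq:diffineq-planA}
a\,\eta^\gamma V'(\eta)+\tfrac12\eps^2 b^2 V''(\eta)\le -1\qquad\text{for all }|\eta|<\delta,\ a\ge A,\ b\in[C_1,C_2].
\ee
Granting \eqref{eq:diffineq-planA}, It\^o's formula applied to $V(\eta_\eps(t\wedge\tau_\eps(\delta)))$ gives $\E\,V(\eta_\eps(t\wedge\tau_\eps(\delta)))\le V(0)-\E\,[t\wedge\tau_\eps(\delta)]$ (the stochastic integral is a true martingale since $b_\eps$ and $V'$ are bounded on $[-\delta,\delta]$), and since $V\ge 0$, letting $t\to\infty$ yields $\E\,\tau_\eps(\delta)\le V(0)$. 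Everything then reduces to building $V$ and checking $V(0)\le K\delta^{1-\gamma}$.

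For the construction I would take (discussing the main case $\gamma\in(0,1)$; for $\gamma\le0$ the drift is even more repelling near the origin and one may use $V(\eta)=c(\delta^{1-\gamma}-|\eta|^{1-\gamma})$ directly, after a mollification at $0$)
\[
\varphi(\eta):=\big(\eta^2+\mu^2\big)^{(1-\gamma)/2},\qquad \mu=\mu_\eps:=M\,\eps^{2/(1+\gamma)},\qquad V(\eta):=c\big(\varphi(\delta)-\varphi(\eta)\big),
\]
with positive constants $M,c$ depending only on $A,C_1,C_2,\gamma$, fixed in this order. The scale $\mu\propto\eps^{2/(1+\gamma)}$ is precisely the crossover between the diffusion‑dominated behaviour of $\eta_\eps$ near $0$ and the drift‑dominated behaviour away from $0$, and is chosen so that $\eps^2\mu^{-(1+\gamma)}=M^{-(1+\gamma)}$ does not depend on $\eps$. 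One checks that $V\ge 0$ on $[-\delta,\delta]$, $V(\pm\delta)=0$, and $\eta^\gamma V'(\eta)=-c(1-\gamma)|\eta|^{1+\gamma}(\eta^2+\mu^2)^{-(1+\gamma)/2}\le 0$, so it is enough to verify \eqref{eq:diffineq-planA} with $a=A$; moreover $V''=-c\varphi''$ with $\varphi''(\eta)=(1-\gamma)(\eta^2+\mu^2)^{-(3+\gamma)/2}(\mu^2-\gamma\eta^2)$, so $V$ is concave on $\{|\eta|\le\mu/\sqrt\gamma\}$, where it suffices to verify \eqref{eq:diffineq-planA} with $b=C_1$, and convex outside, where it suffices with $b=C_2$. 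Substituting $\eta=\mu\zeta$ and using $\eps^2\mu^{-(1+\gamma)}=M^{-(1+\gamma)}$, \eqref{eq:diffineq-planA} turns into two $\eps$‑free inequalities in $\zeta$; in the convex region $\zeta>1/\sqrt\gamma$ the troublesome term $\tfrac12\eps^2 C_2^2|V''|$ is bounded by $c\,C_2^2 M^{-(1+\gamma)}$ times a $\gamma$‑dependent constant, while the repelling term $A\,\eta^\gamma V'$ is bounded below by $c\,A$ times a positive $\gamma$‑dependent constant, so first choosing $M$ large (to kill the former) and then $c$ large makes both inequalities hold.

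To conclude, since $V(0)=c(\varphi(\delta)-\mu^{1-\gamma})\le c(\delta^2+\mu^2)^{(1-\gamma)/2}$, I would take $\eps_0:=(\delta/M)^{(1+\gamma)/2}$, so that $\eps<\eps_0$ forces $\mu\le\delta$ and hence $V(0)\le c\,2^{(1-\gamma)/2}\delta^{1-\gamma}=:K\delta^{1-\gamma}$ with $K=K(A,C_1,C_2)$ (recall $\gamma$ is a fixed parameter). Together with $\E\,\tau_\eps(\delta)\le V(0)$ this is exactly the asserted bound, and the role of the restriction $\eps<\eps_0(\delta)$ is simply to make the regularization scale $\mu$ negligible compared with $\delta$.

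I expect the main obstacle to be the verification of \eqref{eq:diffineq-planA} in the convex region $|\eta|>\mu/\sqrt\gamma$, where the second‑order term of $V$ has the ``wrong'' sign and must be dominated by the repelling first‑order term uniformly in $\eps$; this is where the hypothesis $\gamma<1$ and the special choice $\mu\propto\eps^{2/(1+\gamma)}$ enter, at the cost of fixing $M$ before $c$. The remaining ingredients — the It\^o/optional‑stopping estimate, the martingale property of the stochastic integral, and the elementary bound on $V(0)$ — are routine.
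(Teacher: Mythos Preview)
Your approach is correct and genuinely different from the paper's. The paper first performs a time change to reduce to $b_\eps\equiv 1$, then takes $v_\eps$ to be the \emph{exact} solution of $L_\eps v_\eps=1$ with $L_\eps=Ax^\gamma\frac{d}{dx}+\tfrac{\eps^2}{2}\frac{d^2}{dx^2}$, namely the double integral
\[
v_\eps(x)=\int_0^{|x|}\exp\Big\{-\tfrac{2Ay^{\gamma+1}}{(\gamma+1)\eps^2}\Big\}\Big(\int_0^y\tfrac{2}{\eps^2}\exp\Big\{\tfrac{2Az^{\gamma+1}}{(\gamma+1)\eps^2}\Big\}\,dz\Big)dy,
\]
applies It\^o's formula to obtain $\E\tau_\eps(\delta)\le v_\eps(\delta)$, and then carries out an asymptotic analysis (substitution plus L'H\^opital) to show $v_\eps(\delta)\to K_2\,\delta^{1-\gamma}$ as $\eps\to 0$. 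By contrast, you bypass both the time change and the asymptotics by constructing an explicit elementary supersolution $V(\eta)=c\big((\delta^2+\mu^2)^{(1-\gamma)/2}-(\eta^2+\mu^2)^{(1-\gamma)/2}\big)$ with the ``correct'' regularisation scale $\mu\propto\eps^{2/(1+\gamma)}$; the value $V(0)$ is then read off directly. Your route is more elementary and handles variable $b_\eps(t)$ without reduction, while the paper's exact solution would in principle deliver the sharp constant in the asymptotic $\E\tau_\eps(\delta)\sim K_2\delta^{1-\gamma}$. Note also that your main construction already covers $\gamma\in(-1,0]$ (then $\mu^2-\gamma\eta^2>0$ everywhere, so $V$ is globally concave and only the ``concave region'' inequality is needed), so the parenthetical alternative is unnecessary.
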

The proof of Lemma is quite standard. We postpone it to the Appendix.

Without loss of generality we will assume that
\bel{eq:ass764}
\psi^\pm(x,0)\geq c_1>0  \mbox{ and }\ 0<c_2\leq\beta(x)\leq c_3 \ \mbox{ for all}\   x\in\mbR^d,
\ee
where $c_{1,2,3} $ are  some positive constants.
  This assumption does not restrict generality,  since  the general case can be considered using a localization.   Under this additional assumption,
 Lemma  \ref{lem:est_exit_time} applied to
\[
\tau_{\ve}(\delta):=\inf\{ t\geq 0\ : \ |Y_\ve(t)|\geq \delta \}.
\]
and the Chebyshev inequality yield
\bel{eq:estim_time}
\forall\delta>0\ \exists \ve_0>0\ \forall \ve\in (0,\ve_0):\ \ \
\P({\tau_\ve(\delta)}\geq \delta^{\frac{{1-\gamma}}{2 }})\leq K \delta^{\frac{{1-\gamma}}{2}}.
\ee
\begin{rem}
It can be seen from the construction of $Y^\pm$ that the inequality \eqref{eq:estim_time} is valid
for $\tau^\pm(\delta):=\inf\{ t\geq 0\ : \ |Y^\pm(t)|\geq \delta \}$ also.
\end{rem}
{\bf Step 3.}
We see from \eqref{eq:estim_time} that with high probability the
random variable $\tau_\ve(\delta) $ is dominated by $\delta^{\frac{{1-\gamma}}{2 }}.$
It follows from the  standard estimates for moments of SDEs that for small $t$ we have
$$
\E\sup_{s\in [0,t ]}|X_\ve(s)-x^0|^2\leq C t,
$$
where constant $C$ can be selected independently
 of $\ve\in[0,1]$.

So, we have the following estimates
\bel{eq:estim_dist}
\exists C_1>0\ \ \forall\delta>0\ \exists \ve_0>0\ \forall \ve\in (0,\ve_0):\ \ \
\P(\sup_{t\in [0,\tau_\ve(\delta) ]}|X_\ve(t)-x^0| \geq \delta^{\frac{{1-\gamma}}{6 }} ) \leq C_1\delta^{\frac{{1-\gamma}}{6}},
\ee
\bel{eq:estim_dist1}
\P(\sup_{t\in [0,\tau_\ve(\delta) ]}|X^\pm(t)-x^0|+|Y^\pm(t)|\geq 2\delta^{\frac{{1-\gamma}}{6 }} )
 \leq C_1 \delta^{\frac{{1-\gamma}}{6 }}.
\ee
Verify, for example, \eqref{eq:estim_dist}:
\[
\P(\sup_{t\in [0,\tau_\ve(\delta) ]}|X_\ve(t)-x^0| \geq \delta^{\frac{{1-\gamma}}{6 }} ) \leq
\P( \tau_\ve(\delta)> \delta^{\frac{{1-\gamma}}{2 }})+ \P(\sup_{t\in [0,\delta^{\frac{{1-\gamma}}{2 }}]}|X_\ve(t)-x^0| \geq \delta^{\frac{{1-\gamma}}{6 }} )
\leq
\]
\[
\delta^{\frac{{1-\gamma}}{2 }}+ \frac{\E \sup_{t\in [0,\delta^{\frac{{1-\gamma}}{2 }}]}|X_\ve(t)-x^0|^2 }{\delta^{\frac{{1-\gamma}}{3 }}}\leq
\delta^{\frac{{1-\gamma}}{2 }}+ \frac{C \delta^{\frac{{1-\gamma}}{2 }}}{\delta^{\frac{{1-\gamma}}{3 }}} \leq C_1\delta^{\frac{{1-\gamma}}{6}}.
\]

Note  also that
\be\label{eq:1072}
\sup_{t\in [0,\tau_\ve(\delta) ]} |Y_\ve(t)| =
|Y_\ve(\tau_\ve(\delta) )|=\delta \ \ \mbox{ a.s.}
\ee
 by the definition of $\tau_\ve(\delta).$

{\bf Step 4.} We denote by $(X^{x,y}(t), Y^{x,y}(t))$ a solution to the
 corresponding ODE that starts from $x\in \mbR^d,y\neq 0$.
This solution never hits $\mbR^d\times \{0\}, $ recall \eqref{eq:ass764}.
We  have correctness of  the definition of $(X^{x,y}(t), Y^{x,y}(t))$
 because in all other points
coefficients satisfy  the local Lipschitz condition.

If we wish to highlight that  $y>0$ (or  $y<0$), then the corresponding solution
  is denoted by $(X^{+,x,y}(t), Y^{+,x,y}(t))$ (or $(X^{-,x,y}(t), Y^{-,x,y}(t))$, respectively).

Let $\omega$ be such that $ Y_\ve(\tau_\ve(\delta))=\delta$, i.e., the process
$Y_\ve$ hits $\delta$ earlier than $-\delta.$ Then for this $\omega $ we have

\[
\sup_{t\in[0,T]} \left(|X_\ve(t)-X^+(t)|+|Y_\ve(t)-Y^+(t)|\right)
\leq
\]
\[
\sup_{t\in[0,T]} \left(|X_\ve(\tau_\ve(\delta)+t)-X^{+,X_\ve(\tau_\ve(\delta)),Y_\ve(\tau_\ve(\delta))}(t)|
+|Y_\ve(\tau_\ve(\delta)+t)-Y^{+,X_\ve(\tau_\ve(\delta)),Y_\ve(\tau_\ve(\delta))}(t)|\right)
+
\]
\[
\sup_{t\in[0,T]} \left(|X^{+,X_\ve(\tau_\ve(\delta)),Y_\ve(\tau_\ve(\delta))}(t)-X^+(\tau_\ve(\delta)+t)|+
|Y^{+,X_\ve(\tau_\ve(\delta)),Y_\ve(\tau_\ve(\delta))}(t)-Y^+(\tau_\ve(\delta)+t)|\right)
+
\]
\[
\sup_{t\in[0,T]} \left(|X^+(\tau_\ve(\delta)+t)-X^+(t)|+
|Y^+(\tau_\ve(\delta)+t)-Y^+( t)|\right)+ \sup_{t\in[0,\tau_\ve(\delta)]} (|X_\ve(t)-x^0|+|Y_\ve(t)| )=
\]
\[
I_1+...+I_4.
\]
Select small $\delta>0$ and after that  select $\ve_0>0$ from \eqref{eq:estim_time}. It follows from
\eqref{eq:estim_dist}, \eqref{eq:estim_dist1},
and construction of
$(X^+,Y^+)$ in \S\ref{subsection:repulsion} that $I_2, I_3, I_4$ are small with high probability.

To estimate $I_1$ we need the following statement  on integral equations. Let $f(t)=(f_X(t), f_Y(t))$
be a non-random continuous function,
and functions $X^{\pm, x,y}_{(f)}, Y^{\pm, x,y}_{(f)}  $ satisfy the integral equation
\[
\ba
 X^{\pm, x,y}_{(f)}(t)&=x+\int_0^t\psi^\pm\big(X^{\pm, x,y}_{(f)}(s), Y^{\pm, x,y}_{(f)}(s)\big)\, \d s +f_X(t) ,\\
 Y^{\pm, x,y}_{(f)}(t)&= \int_0^t \phi^\pm (X^{\pm, x,y}_{(f)}(s), Y^{\pm, x,y}_{(f)}(s))  (Y^{\pm, x,y}_{(f)})^\gamma(s) \, \d s+f_Y(t), \ t\in[0,T],\\
X^{\pm, x,y}_{(f)}(0)&=x, \ Y^{\pm, x,y}_{(f)}(0)=y.
\ea
\]
\begin{rem}
We do not assume that a pair $X^{\pm, x,y}_{(f)}, Y^{\pm, x,y}_{(f)}  $ is a unique solution.
 Recall also that the domains of $\psi^\pm, \vf^\pm$ is the whole space.
\end{rem}
\begin{lem}\label{lem:int_etim1}
\[
\ba
\forall  \delta>0\ \forall R\geq 1 \ \exists \alpha>0\  \forall x\in[-R,R] \ \forall t\in[0,T]\
\forall f, \ \|f\|_\infty<\alpha
\\
\forall y\in [\frac{1}{R}, R]:\ \ \  |X^{+, x,y}_{(f)}(t)- X^{+, x,y}(t)| +
 |Y^{+, x,y}_{(f)}(t)- Y^{+, x,y}(t)|\leq \delta
\\
\forall y \in [-R, -\frac{1}{R}]:\ \ \  |X^{-, x,y}_{(f)}(t)- X^{-, x,y}(t)| +
 |Y^{-, x,y}_{(f)}(t)- Y^{-, x,y}(t)|\leq \delta.
\ea
\]
\end{lem}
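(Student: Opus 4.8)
This is a quantitative continuous–dependence statement: the perturbed trajectories $(X^{\pm,x,y}_{(f)},Y^{\pm,x,y}_{(f)})$ should stay within $\delta$ of the unperturbed ones $(X^{\pm,x,y},Y^{\pm,x,y})$ when $\|f\|_\infty$ is small, \emph{uniformly} over the stated ranges of $x,y$ and over all continuous $f$. The only obstruction to a plain Gronwall argument is the factor $Y^\gamma$, which fails to be Lipschitz at $Y=0$; hence the core of the proof is to confine both trajectories to one fixed compact set that stays bounded away from the hyperplane $\{y=0\}$ on the whole interval $[0,T]$. I treat the $+$ case; the $-$ case is symmetric.

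First I would record a priori bounds on the unperturbed solution. For $y\in[1/R,R]$ the solution $Y^{+,x,y}$ starts positive and can never reach $0$: if $Y^{+,x,y}(t)$ were small it would lie in the neighbourhood of $H$ where $\phi^+>0$ (by continuity and $\phi^\pm(\cdot,0)>0$), so there $\dot Y^{+,x,y}=\phi^+(X^{+,x,y},Y^{+,x,y})\,(Y^{+,x,y})^\gamma>0$, which precludes a decrease down to $0$; this is exactly the non-return property stated in Step~4. Combined with the boundedness of $\psi^\pm,\phi^\pm$ from \textbf{B1} — which gives $|X^{+,x,y}(t)|\le R+CT$ and $(Y^{+,x,y})^{1-\gamma}(t)\le R^{1-\gamma}+(1-\gamma)CT$ on $[0,T]$ — and the continuity of the flow in the initial data on $\{y>0\}$ (where $\psi^+$, $\phi^+$ and $y\mapsto y^\gamma$ are locally Lipschitz), this shows that the whole family $\{(X^{+,x,y}(\cdot),Y^{+,x,y}(\cdot)):|x|\le R,\ y\in[1/R,R]\}$ takes values in a single compact set $K_0\subset\mbR^d\times(0,\infty)$, so that $m_R:=\inf\{Y^{+,x,y}(t):|x|\le R,\ y\in[1/R,R],\ t\in[0,T]\}>0$.

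Next, for fixed $\delta>0$ (without loss of generality $\delta\le\min(1,m_R/2)$) I would run a stopping/bootstrap argument. Put $g(t)=|X^{+,x,y}_{(f)}(t)-X^{+,x,y}(t)|+|Y^{+,x,y}_{(f)}(t)-Y^{+,x,y}(t)|$ and $\tau=\inf\{t\ge 0:g(t)\ge\delta\}$; since $f$ is continuous and $g(0)=0$, one has $g<\delta$ on $[0,\tau)$. On $[0,\tau]$, $Y^{+,x,y}_{(f)}(t)\ge m_R-\delta\ge m_R/2>0$, and by boundedness of the coefficients both trajectories stay in a compact $K\supset K_0$ depending only on $R,T$; on $K$ the maps $(x,y)\mapsto\psi^+(x,y)$ and $(x,y)\mapsto\phi^+(x,y)y^\gamma$ are Lipschitz (local Lipschitzness of $\psi^+,\phi^+$ together with the $C^1$ character of $y\mapsto y^\gamma$ away from $0$), with a common constant $L=L_{R,T}$. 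Subtracting the two integral equations and using this bound gives $g(t)\le 2L\int_0^t g(s)\,\d s+2\|f\|_\infty$ for $t\in[0,\tau]$, whence by Gronwall $g(t)\le 2\|f\|_\infty e^{2LT}$ on $[0,\tau]$.

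Finally I would choose $\alpha:=\delta\,e^{-2LT}/4$, which depends only on $\delta,R,T$: if $\|f\|_\infty<\alpha$ then $g<\delta/2$ on $[0,\tau]$, and by continuity this forces $\tau>T$ (otherwise $g(\tau)=\delta$), so $g(t)<\delta$ for all $t\in[0,T]$ and all admissible $x,y,f$, which is the assertion. The $-$ case is identical, with $[1/R,R]$ replaced by $[-R,-1/R]$, $\phi^+$ by $\phi^-$, and $(0,\infty)$ by $(-\infty,0)$ ($Y^{-,x,y}$ is then non-increasing and stays $\le -1/R$). The only genuinely delicate point is the confinement away from $H$ in the first two steps — i.e. ensuring the non-Lipschitz term $Y^\gamma$ is never evaluated near $0$ along either trajectory, which uses the repulsive sign structure and the stopping time $\tau$; everything after that is the routine Gronwall estimate.
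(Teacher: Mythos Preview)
Your argument is correct and is precisely the approach the paper has in mind; the paper's own proof is only the two-sentence sketch ``if $\alpha$ is small enough, then $Y^{\pm,x,y}_{(f)}(t)\neq 0$ on $[0,T]$ and the coefficients are locally Lipschitz for $y\neq 0$'', which you have unpacked in full (confinement away from $H$, stopping time, Gronwall, choice of $\alpha$). One small slip: in your final parenthetical you claim $Y^{-,x,y}$ is non-increasing and stays $\le -1/R$; neither monotonicity nor the specific bound $-1/R$ is guaranteed globally, since $\phi^-(x,y)$ need not keep a sign away from $y=0$ --- but your barrier argument near $y=0$ already gives the correct conclusion $\sup_t Y^{-,x,y}(t)\le -m_R<0$, which is all that is needed.
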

The proof of the Lemma is standard. Notice that if $\alpha$ is small
enough, then  $ Y^{\pm, x,y}_{(f)}(t)\neq 0, t\in[0,T]   $ and coefficients of
the integral equations  are locally Lipschitz continuous if $y\neq 0.$

Let $\omega$ be such that $ Y_\ve(\tau_\ve(\delta))=\delta$.
Then
\[
\ba
|X_\ve(\tau_\ve(\delta)+t)-X^{+,X_\ve(\tau_\ve(\delta)),Y_\ve(\tau_\ve(\delta))}(t)|
+|Y_\ve(\tau_\ve(\delta)+t)-Y^{+,X_\ve(\tau_\ve(\delta)),Y_\ve(\tau_\ve(\delta))}(t)|=\\
\left(|X^{+, x,\delta}_{(f)}(t)- X^{+, x,\delta}(t)| +
 |Y^{+, x,\delta}_{(f)}(t)- Y^{+, x,\delta}(t)|\right) \Big|_{x=X_\ve(\tau_\ve(\delta))},
\ea
\]
where
\[
f(t):= \left( \eps \int_{\tau_\ve(\delta)}^{\tau_\ve(\delta)+t} b\big(X_\eps(s), Y_\eps(s)\big) \d B(s),
\ \eps \int_0^t \beta\big(X_\eps(s), Y_\eps(s)\big) \d W(s)\right),
\]

Since $b$ and $\beta$ are bounded we have the uniform convergence in probability:
\[
\eps  \sup_{t\in{[0,T]}}\left(
 | \int_{\tau_\ve(\delta)}^{\tau_\ve(\delta)+t} b\big(X_\eps(s), Y_\eps(s)\big) \d B(s)|+
|\int_0^t \beta\big(X_\eps(s), Y_\eps(s)\big) \d W(s)|\right) \overset{\P}\to 0, \ \ve\to0
\]
for any $\delta>0.$

This, \eqref{eq:estim_dist}, \eqref{eq:1072}, and Lemma \ref{lem:int_etim1} give us convergence
\[
\sup_{t\in{[0,T]}}\left(
|X_\ve(\tau_\ve(\delta)+t)-X^{X_\ve(\tau_\ve(\delta)),Y_\ve(\tau_\ve(\delta))}(t)|
+|Y_\ve(\tau_\ve(\delta)+t)-Y^{X_\ve(\tau_\ve(\delta)),Y_\ve(\tau_\ve(\delta))}(t)|\right) \overset{\P}\to 0, \ \ve\to0
\]
for any $\delta>0.$


{\bf Step 5.}
The proof of the Theorem follows from Step 4 and the next estimate of probabilities $\P({Y_\ve(\tau_\ve(\delta))=\pm\delta})$.
\begin{lem}\label{lem:probab_exit}
$\forall \mu>0, \ \delta_0>0\ \  \exists \delta\in(0,\delta_0)$:
\[
p^+-\mu\leq\liminf_{\ve\to0}\P({Y_\ve(\tau_\ve(\delta))=\delta}) \leq \limsup_{\ve\to0} \P({Y_\ve(\tau_\ve(\delta))=\delta})\leq p^+ +\mu,
\]
\[
p^--\mu\leq\liminf_{\ve\to0}\P({Y_\ve(\tau_\ve(\delta))=-\delta}) \leq \limsup_{\ve\to0} \P({Y_\ve(\tau_\ve(\delta))=-\delta})\leq p^- +\mu,
\]
where $p^\pm$ are defined in  \eqref{eq_p+-}.
\end{lem}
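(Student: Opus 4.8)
The plan is to show that on the short, small time interval $[0,\tau_\ve(\delta)]$ the process $Y_\ve$ is squeezed between two genuine one-dimensional diffusions obtained by freezing the coefficients at $(x^0,0)$, for which the exit probabilities from $(-\delta,\delta)$ are given explicitly by the scale function; letting $\ve\to0$, then a freezing parameter $\eta\to0$, then $\delta\to0$, will reproduce $p^\pm$. First I would localise and freeze. Fix $\mu>0$. By continuity of $\phi^\pm,\beta^\pm$ and \eqref{eq:ass764}, choose $\eta>0$ with $\eta<\min_\pm\{\phi^\pm(x^0,0),\beta^\pm(x^0,0)\}$, and then $\delta\in(0,\delta_0)$ so small that $|\phi(x,y)-\phi^\pm(x^0,0)|<\eta$ and $|\beta(x,y)-\beta^\pm(x^0,0)|<\eta$ whenever $\pm y\ge0$, $|x-x^0|\le\delta^{(1-\gamma)/6}$ and $|y|\le\delta$. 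Put $\rho_\ve:=\inf\{t\ge0:|X_\ve(t)-x^0|>\delta^{(1-\gamma)/6}\}$ and $E_\ve:=\{\rho_\ve\ge\tau_\ve(\delta)\}$; by \eqref{eq:estim_dist}, $\P(E_\ve^c)\le C_1\delta^{(1-\gamma)/6}$ for all $\ve<\ve_0$, so after shrinking $\delta$ we may also assume $\P(E_\ve^c)<\mu/3$. On $\{t\le\tau_\ve(\delta)\wedge\rho_\ve\}$ the drift $D_\ve(t):=\phi(X_\ve(t),Y_\ve(t))\,Y_\ve^\gamma(t)$ and the diffusion coefficient $\ve\,\beta(X_\ve(t),Y_\ve(t))$ of the $Y_\ve$-equation lie between fixed bounds depending only on $\eta$ and on $|Y_\ve(t)|^\gamma\sgn Y_\ve(t)$.

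Then I would remove the non-constant diffusion coefficient by a random time change. Write $Y_\ve(t)=\int_0^tD_\ve(s)\,\d s+M_\ve(t)$ with $M_\ve(t)=\ve\int_0^t\beta(X_\ve(s),Y_\ve(s))\,\d W(s)$; since $\beta$ is bounded and bounded away from zero, $A_\ve(t):=\langle M_\ve\rangle_t$ is a continuous strictly increasing bijection of $[0,\infty)$. With $\theta_\ve:=A_\ve^{-1}$ and $\widetilde Y_\ve(u):=Y_\ve(\theta_\ve(u))$, the Dambis--Dubins--Schwarz theorem gives $\widetilde Y_\ve(u)=\int_0^u\widetilde D_\ve(v)\,\d v+\widetilde W(u)$ for a Brownian motion $\widetilde W$, where $\widetilde D_\ve(v)=D_\ve(\theta_\ve(v))\big/\big(\ve^2\beta^2(X_\ve(\theta_\ve(v)),Y_\ve(\theta_\ve(v)))\big)$. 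A time change does not affect which endpoint of an interval is reached first, so $\P(Y_\ve(\tau_\ve(\delta))=\pm\delta)=\P(\widetilde Y_\ve\ \mbox{exits}\ (-\delta,\delta)\ \mbox{at}\ \pm\delta)$, and on $E_\ve$, up to the exit of $\widetilde Y_\ve$ from $(-\delta,\delta)$, the previous bounds yield $\ve^{-2}g^-_\eta(\widetilde Y_\ve(v))\le\widetilde D_\ve(v)\le\ve^{-2}g^+_\eta(\widetilde Y_\ve(v))$, where $g^\pm_\eta$ are fixed functions of the form $\alpha\,|y|^\gamma$ on $\{y\ge0\}$ and $-\alpha'\,|y|^\gamma$ on $\{y<0\}$, with positive constants satisfying $g^-_\eta\le g^+_\eta$ pointwise and converging, as $\eta\to0$, to $\phi^\pm(x^0,0)/(\beta^\pm(x^0,0))^2$.

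Finally I would compare and compute. Let $\widetilde Y^\pm_\ve$ solve $\d\widetilde Y^\pm_\ve=\ve^{-2}g^\pm_\eta(\widetilde Y^\pm_\ve)\,\d v+\d\widetilde W$ with $\widetilde Y^\pm_\ve(0)=0$; because $\gamma\in(0,1)$ the drifts $g^\pm_\eta$ are continuous with at most linear growth, so these equations have pathwise unique strong solutions and a standard (localised) comparison theorem gives $\widetilde Y^-_\ve\le\widetilde Y_\ve\le\widetilde Y^+_\ve$ on $E_\ve$ up to the exit of $\widetilde Y_\ve$. Hence on $E_\ve$, if $Y_\ve$ leaves $(-\delta,\delta)$ at $+\delta$, then $\widetilde Y^+_\ve$ stays $>-\delta$ throughout that excursion and reaches $\delta$, so $\{Y_\ve(\tau_\ve(\delta))=\delta\}\cap E_\ve\subseteq\{\widetilde Y^+_\ve\ \mbox{exits at}\ \delta\}$, and symmetrically $\{Y_\ve(\tau_\ve(\delta))=-\delta\}\cap E_\ve\subseteq\{\widetilde Y^-_\ve\ \mbox{exits at}\ -\delta\}$. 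For the honest diffusions $\widetilde Y^\pm_\ve$ the exit probability is $\int_{-\delta}^0 s'\big/\int_{-\delta}^\delta s'$ with $s'(y)=\exp(-2\ve^{-2}\int_0^yg^\pm_\eta)$; substituting $y=\ve^{2/(\gamma+1)}v$ turns $\int_0^\delta$ and $\int_{-\delta}^0$ into $\ve^{2/(\gamma+1)}\int_0^{\delta\ve^{-2/(\gamma+1)}}\exp(-\tfrac{2a}{\gamma+1}v^{\gamma+1})\,\d v$, whose limit as $\ve\to0$ is $\ve^{2/(\gamma+1)}$ times a Gamma integral $\propto a^{-1/(\gamma+1)}$ ($a$ the relevant constant of $g^\pm_\eta$); the $\ve$- and the universal $\gamma$-dependent factor cancel in the ratio, so $\lim_{\ve\to0}\P(\widetilde Y^+_\ve\ \mbox{exits at}\ \delta)$ is a ratio $(\alpha^+)^{1/(\gamma+1)}\big/\big((\alpha^+)^{1/(\gamma+1)}+(\alpha^-)^{1/(\gamma+1)}\big)$ with $\alpha^\pm\to\phi^\pm(x^0,0)/(\beta^\pm(x^0,0))^2$, hence $\to p^+$ as $\eta\to0$, independently of $\delta$; similarly $\lim_{\ve\to0}\P(\widetilde Y^-_\ve\ \mbox{exits at}\ -\delta)\to p^-$. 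Combining with $\P(E_\ve^c)<\mu/3$: $\limsup_{\ve\to0}\P(Y_\ve(\tau_\ve(\delta))=\delta)\le\mu/3+(p^++\mu/3)<p^++\mu$ and $\liminf_{\ve\to0}\P(Y_\ve(\tau_\ve(\delta))=\delta)\ge1-\mu/3-(p^-+\mu/3)>p^+-\mu$ for the chosen $\delta$; the second line of the Lemma follows because $Y_\ve$ almost surely leaves $(-\delta,\delta)$ at one of the two endpoints at time $\tau_\ve(\delta)$ and $p^++p^-=1$.

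The crux — and the main obstacle — is this last step: passing from the non-Markovian, sign-discontinuous-drift equation for $Y_\ve$ to an explicitly solvable scalar diffusion. The random time change is indispensable, because the comparison theorem requires equal diffusion coefficients and so cannot by itself replace the path-dependent coefficient $\ve\,\beta(X_\ve,Y_\ve)$ by a frozen one; and the drift $\propto|y|^\gamma$ is only H\"older, so the comparison principle is available precisely because $\gamma<1$ (consistent with $\gamma=1$ being critical). Two lesser technical points: the coefficient bounds hold only on the good event $E_\ve$ and only up to the stopping time $\rho_\ve$, so the comparison must be carried out in localised form; and after rescaling one must control exit from the moving window $(-\delta\ve^{-2/(\gamma+1)},\delta\ve^{-2/(\gamma+1)})$, which is why working with the explicit scale function — rather than with weak convergence of the rescaled processes — is the convenient route.
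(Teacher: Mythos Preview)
Your argument is correct and lands on the same $\Gamma$-integral asymptotics as the paper, but the route is genuinely different. The paper does \emph{not} time-change and does \emph{not} invoke a comparison theorem: it writes down directly a perturbed scale-type function
\[
s_\ve(y)=\begin{cases}
\displaystyle\int_0^y\exp\!\Big\{-\tfrac{2(\vf^+(x^0,0)+\nu)\,z^{\gamma+1}}{\ve^2(\gamma+1)((\beta^+(x^0,0))^2-\nu)}\Big\}\,dz,& y\ge0,\\[2mm]
\displaystyle\int_0^y\exp\!\Big\{-\tfrac{2(\vf^-(x^0,0)-\nu)\,|z|^{\gamma+1}}{\ve^2(\gamma+1)((\beta^-(x^0,0))^2+\nu)}\Big\}\,dz,& y<0,
\end{cases}
\]
checks by one line of calculus that $\vf(x,y)y^\gamma s_\ve'(y)+\tfrac{\ve^2}{2}\beta^2(x,y)s_\ve''(y)\le 0$ on the localised region, and then It\^o's formula gives $\E s_\ve(Y_\ve(\tau_\ve(\delta)))\le 0$, from which the upper bound on $\P(Y_\ve(\tau_\ve(\delta))=\delta)$ follows after the same $\ve\to0$ computation you perform; the remaining three inequalities are obtained by flipping the sign of $\nu$. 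In effect the paper exploits that a scale function for \emph{any} diffusion with drift $\ge$ (resp.\ $\le$) the true one and diffusion coefficient $\le$ (resp.\ $\ge$) the true one yields a super- (resp.\ sub-)martingale for $Y_\ve$, so the sandwiching you build via DDS and pathwise comparison is already encoded in the sign of the generator applied to $s_\ve$. Your approach buys a more explicit probabilistic picture (two honest bracketing diffusions), at the price of extra machinery: the DDS representation, and a comparison theorem where one process has a random drift bounded by a merely H\"older function of the other --- valid because the bracketing SDEs have pathwise uniqueness (unit diffusion, bounded drift), but requiring the $\epsilon$-shift/limit or Yamada--Watanabe local-time argument rather than being an off-the-shelf statement. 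The paper's supermartingale shortcut sidesteps all of that.
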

\begin{proof}[Proof of Lemma \ref{lem:probab_exit}]
Let $\nu>0$ be arbitrary. Select $\delta_1>0$ such that
\bel{eq:930}
|\vf^\pm(x,y)- \vf^\pm(x^0,0)|\leq \nu,\ \ 0< (\beta^\pm(x^0,0))^2-\nu<
 (\beta^\pm(x,y))^2<(\beta^\pm(x^0,0))^2+\nu
\ee
 as $|x-x^0|<\delta_1, \ |y|\in [0, \delta_1].$

Set $\sigma_{\ve}(\delta):=\inf\{ t\geq 0\ : \ |X_\ve(t)-x^0|\geq \delta\}.$

It follows from \eqref{eq:estim_dist} that
$\P\big(\sigma_{\ve}(\delta^{\frac{{1-\gamma}}{6 }})<\tau_\ve(\delta)\big)< C
 \delta^{\frac{{1-\gamma}}{6 }}$ for small $\ve$. Hence, if $\delta^{\frac{{1-\gamma}}{6 }}<\delta_1,$ then  with
probability greater than $1-C \delta^{\frac{{1-\gamma}}{6 }}$ the process $Y_\ve$ exits
$[-\delta,\delta]$ before $X_\ve$ exits $[-\delta_1,\delta_1]$. Hence, without loss of generality we will assume that \eqref{eq:930} is satisfied for all $(x,y)$.

Set
\[
s_\ve(y):=
\begin{cases}
\int_0^y \exp\{-\frac{2 (\vf^+(x^0,0)+\nu) z^{\gamma+1}}{\ve^2(\gamma+1)(\beta^+(x^0,0)^2-\nu)}\} dz, \ y\geq 0;\\
\\
\int_0^y \exp\{-\frac{2 (\vf^-(x^0,0)-\nu) |z|^{\gamma+1}}{\ve^2(\gamma+1)(\beta^-(x^0,0)^2+\nu)}\} dz, \ y\leq 0.
\end{cases}
\]
Then
\[
\vf(x,y)y^{\gamma} s_\ve'(y)+ \frac{\ve^2}2 \beta^2(x,y) s''_\ve(y)\leq 0
\]
for all $x,y$ (recall that we assume that \eqref{eq:930} is satisfied for all $(x,y)$
).

So
\[
0\geq\E s_\ve(Y_\ve(\tau_\ve(\delta)))=
s_\ve(\delta)\P(Y_\ve(\tau_\ve(\delta))=\delta)
+ s_\ve(-\delta)\P(Y_\ve(\tau_\ve(\delta))=-\delta)=
\]
\[
s_\ve(\delta)\P(Y_\ve(\tau_\ve(\delta))=\delta)
+ s_\ve(-\delta)(1-\P(Y_\ve(\tau_\ve(\delta))=\delta))=
(s_\ve(\delta)-s_\ve(-\delta))\P(Y_\ve(\tau_\ve(\delta))=\delta)
+ s_\ve(-\delta).
\]
Therefore
\[
\limsup_{\ve\to0} \P({Y_\ve(\tau_\ve(\delta))=\delta})\leq
\lim_{\ve\to0} \frac{-s_\ve(-\delta)}{s_\ve(\delta)-s_\ve(-\delta)}=
\]
\[
\lim_{\ve\to0} \frac{\int_{-\delta}^0 \exp\{-\frac{2 (\vf^-(x^0,0)-\nu) |z|^{\gamma+1}}{\ve^2(\gamma+1)((\beta^-(x^0,0))^2+\nu)} \} dz}
{ \int_{-\delta}^0 \exp\{-\frac{2 (\vf^-(x^0,0)-\nu) |z|^{\gamma+1}}{\ve^2(\gamma+1)((\beta^-(x^0,0))^2+\nu)} \} dz +\int_{0}^\delta \exp\{-\frac{2 (\vf^+(x^0,0)+\nu) z^{\gamma+1}}{\ve^2(\gamma+1)((\beta^+(x^0,0))^2-\nu)}\} dz}=
\]
\[
\frac{\left(\frac{  \vf^+(x^0,0)+\nu  }{ (\beta^+(x^0,0))^2-\nu }\right)^{\frac{1}{\gamma+1}}}
{\left(\frac{  \vf^-(x^0,0)-\nu  }{ (\beta^-(x^0,0))^2+\nu }\right)^{\frac{1}{\gamma+1}}+ \left(\frac{  \vf^+(x^0,0)+\nu  }{ (\beta^+(x^0,0))^2-\nu }\right)^{\frac{1}{\gamma+1}}}.
\]
Here we used the following
\[
\int_0^\delta \exp\{ -\frac{Az^{\gamma+1}}{\ve^2} \}dz=
\frac{1}{1+\gamma}  (\frac{\ve^2}{A})^{\frac{1}{1+\gamma} }\int_0^{\frac{A\delta^{\gamma+1}}{\ve^2}}    e^{ -t} t^{\frac{1}{1+\gamma}-1} dt \sim \frac{1}{1+\gamma}  (\frac{\ve^2}{A})^{\frac{1}{1+\gamma} }\Gamma(\frac{1}{1+\gamma} ),\ \ve\to0
\]
for any $A>0, \delta>0.$

Since, $\nu$ was arbitrary, this completes the proof of Lemma \ref{lem:probab_exit} and Theorem \ref{thm:main_small_noise_rep}.
\end{proof}

\section{Proof of Theorem \ref{thm:main_small_noise_attr}}\label{Th2.2}

At the beginning notice that
\be\label{eq:Yto0}
Y_\ve\Rightarrow 0,\quad \ve\to0.
\ee
Indeed, by It\^o's formula we have
\[
 Y_\ve^2(t)\leq  C \ve^2 + 2\ve \int_0^t Y_\ve(s)\beta\big( X_\eps(s),Y_\eps(s)\big)\,  \d W(s),
\]
where $C$ is independent of $\ve.$ Hence we get an estimate
\[
\forall {\ve>0}\ \ \  \sup_{t\in[0,T]} \E Y_\ve^2(t)   \leq C\eps^2.
\]
It follows from the Doob inequality that
\[
\sup_{t\in[0,T]} |2\ve \int_0^t Y_\ve(s)\beta\big( X_\eps(s),Y_\eps(s)\big)\,  \d W(s)  | \overset{\P}\to  0, \quad \ve\to0.
\]
This completes the proof of   \eqref{eq:Yto0}.

Let $\delta>0$ be a fixed number.
Notice that
 \[
\eps^{-\delta}Y_\eps(t)=  \eps^{ \delta(\gamma-1)} \int_0^t\phi (X_\eps(s), Y_\eps(s))
(\eps^{-\delta}Y_\eps(t))^\gamma \, \d t+
\]
\[
\eps^{1-\delta-\frac{\delta(\gamma-1)}{2}}\, \eps^{ \frac{\delta(\gamma-1)}{2}} \int_0^t\beta\big( X_\eps(s),Y_\eps(s)\big)\,  \d W(s)=
\]
\[
 \int_0^t\phi (X_\eps(s), \eps^{\delta}\eps^{-\delta}Y_\eps(s))
(\eps^{-\delta}Y_\eps(t))^\gamma \, \d ( \eps^{ \delta(\gamma-1)}  t)+
\]
\[
\eps^{1-\frac{\delta(\gamma+1)}{2}} \int_0^t\beta\big( X_\eps(s), \eps^{\delta}\eps^{-\delta} Y_\eps(s)\big)\,   \d W_\eps(\eps^{ \delta(\gamma-1)}s),
\]
where $W_\eps(t)=\eps^{ \frac{\delta(\gamma-1)}{2}} W (\eps^{- \delta(\gamma-1)} t)$ is a Wiener process.

If $1-\frac{\delta(\gamma+1)}{2}=0,$ i.e., $\delta=\frac{2}{\gamma+1},$ then the process
$\tilde Y_\eps(t):=\eps^{-\delta}Y_\eps(t)=\eps^{ \frac{-2}{\gamma+1}}Y_\eps(t)$ satisfies the SDE
\[
\tilde Y_\eps(t)=
  \int_0^t\phi (X_\eps(s), \eps^{\frac{2}{\gamma+1}}\tilde Y_\eps(s))
 \tilde Y^\gamma_\eps(s)  \, \d (\eps^{  \frac{2(\gamma-1)}{\gamma+1} } s)+
  \int_0^t\beta\big( X_\eps(s), \eps^{\frac{2}{\gamma+1}}\tilde Y_\eps(s) \big)\,   \d W_\eps(\eps^{  \frac{2(\gamma-1)}{\gamma+1} }s).
\]
Set $\tilde \eps=\eps^{  \frac{2(1-\gamma)}{\gamma+1} }.$ Therefore
\be\label{eq:small_trasformed}
\ba
\d X_{  \eps}(t)&=a_{\tilde \eps}\big(X_{  \eps}(t), \tilde Y_{  \eps}(t)\big)\, \d t
+b_{\tilde \eps}\big(X_{  \eps}(t), \tilde Y_{  \eps}(t)\big)\, \d B(t),
\\
\d \tilde Y_{  \eps}(t)&=  \alpha_{\tilde \eps} (X_{\eps}(t), \tilde Y_{\eps}(t)) 
\, \d \tilde\eps^{-1}t+
   \beta_{\tilde \eps}\big( X_{\eps}(t),\tilde Y_{\eps}(t)\big)\, \d W_{\tilde \eps}({\tilde \eps}^{-1}t),
 \ea
\ee
where
\[
a_{\tilde \eps}(x,y)=\psi(x,  \eps^{\frac{2}{\gamma+1}}y)= \psi(x, \tilde\eps^{\frac{1}{1-\gamma}}y),
  \ b_{\tilde \eps}(x,y)=
{\eps} b(x,\eps^{\frac{2}{\gamma+1}}y)
={\tilde \eps}^{(\gamma+1)/2(1-\gamma)}b(x,\tilde\eps^{\frac{1}{1-\gamma}}y),
\]
\[
 \alpha_{\tilde \eps}(x,y)=\phi (x, \eps^{\frac{2}{\gamma+1}}y)   y^\gamma
= \phi (x, \tilde\eps^{\frac{1}{1-\gamma}}y)   y^\gamma,\
 \beta_{\tilde \eps}(x,y)=\beta(x, \eps^{\frac{2}{\gamma+1}}y)=\beta(x, \tilde\eps^{\frac{1}{1-\gamma}}y).
\]
 We see that   the system \eqref{eq:small_trasformed} has the form \eqref{DA_fully coupled}.
Let us apply Theorem \ref{thm:mainAP}, where $k=1$,
\[
\begin{aligned}
a^\ve(x,y):= a_{\tilde \ve}(x,y)
, \
\sigma^\ve(x,y)=c^\ve(x,y,u)= C^\ve(x,y,z)=0,\\
A^\ve(x,y):=\alpha_{\tilde \eps}(x,y)
,\
\Sigma^\ve(x,y):=   \beta_{\tilde \eps}(x,y)
,\\
\xi_\ve(t):={\tilde \eps}^{(\gamma+1)/2(1-\gamma)} \int_0^t
b(X_\ve(s), \tilde\eps^{\frac{1}{1-\gamma}} \tilde Y_{\tilde \ve}(s)) dB(s).
\end{aligned}
\]

Conditions
 $\mathbf{H}_0$, $\mathbf{H}_1$, and $\mathbf{H}_2$  are obviously true.

Conditions $\mathbf{H}_3$, $\mathbf{H}_4$ are
 satisfied with
$$
A(x,y)= (\phi^+(x, 0)\1_{y>0} + \phi^-(x, 0)\1_{y<0})  y^\gamma ,\  \ \Sigma(x,y)=
\beta^+(x,0)\1_{y\geq 0}+ \beta^-(x,0)\1_{y< 0},
$$
$$
 C(x,y,z)=0,\ \  \nu=\mu=0.
$$

Without loss of generality we will assume that
\bel{eq:1209}
\vf^\pm(x,0)\leq c <0   \ \mbox{ for all}\   x\in\mbR^d,
\ee
where $c  $ is a  constant.
The general case can be considered using a localization.
Hence, condition $\mathbf{H}_5$ is satisfied with $\kappa=\gamma$.

Consider equation with frozen coefficients
\begin{equation}\label{SDEfrozen}
\begin{aligned}
\d y^{(x)}(t) &=     (\phi^+(x, 0)\1_{y^{(x)}(t)>0} + \phi^-(x, 0)\1_{y^{(x)}(t)<0})(y^{(x)}(t))^\gamma  \, \d  t
\\&+
  \left(
 \beta^+(x,0)\1_{y^{(x)}(t)\geq 0}+ \beta^-(x,0)\1_{{y^{(x)}(t)<0}}
\right)
 \, \d W(t).
 \end{aligned}
\end{equation}

{
Existence and uniqueness of a weak solution to   equation with frozen
coefficients, and the strong Markov property
follows from \cite{EngelbertSchmidt3}.
Hence condition $\mathbf{F}_0$ holds true.
}

{
 To verify condition $\mathbf{F}_1$, we modify the argument from \cite[Section~3.3.2]{K_book}. Because the diffusion coefficient in  \eqref{SDEfrozen}  is discontinuous, we do not have a good reference to state that the transition probability density $p^{(x)}(t,y,y')$ is continuous in $x,y,y'$. In order to overcome this minor difficulty we use the following localization argument. Consider the SDE
\begin{equation}\label{SDEfrozen+}
\begin{aligned}
\d y^{(x,+)}(t) &=     \phi^+(x, 0)(|y^{(x,+)}(t)|\wedge 2)^\gamma\sgn( y^{(x,+)}(t)) \, \d  t
+\beta^+(x,0)
 \, \d W(t).
 \end{aligned}
\end{equation}
This is an SDE with a constant diffusion coefficient and bounded and H\"older continuous drift coefficient, hence the standard analytic theory (e.g. \cite{Fr64}) yields that its transition probability density $p^{(x,+)}(t,y,y')$ is continuous in $x,y,y'$. Then for $y_0=1$ and every $t_0>0$ it holds that
$$
\sup_{|x|\leq R}\|P_{t_0}^{(x,+)}(y, \d y')- P_{t_0}^{(x,+)}(y_0, \d y')\|_{TV}=
\sup_{|x|\leq R}\int| p_{t_0}^{(x,+)}(y,  y')- p_{t_0}^{(x,+)}(y_0,  y')|\, \d y'\to 0, \quad y\to y_0. 
$$
The coefficients of the equations \eqref{SDEfrozen}, \eqref{SDEfrozen+} coincide on $[0,2]$, and thus the laws of the solutions to these equations, stopped at the moment of exit from $[0,2]$, coincide. Taking $t_0$ small enough, we can guarantee that each of these solutions stay in $[0,2]$ up to the time $t_0$ with probability $\geq \frac56$ if the initial value $y$ stays in $[\frac12, \frac32]$. By the coupling characterization of the TV distance (the `Coupling Lemma', e.g. \cite[Theorem 2.2.2]{K_book}), this yields that, for such $t_0$,
$$
\sup_{y_1,y_2\in [\frac12, \frac32], |x|\leq R}\|P_{t_0}^{(x)}(y_1, \d y')- P_{t_0}^{(x,+)}(y_2, \d y')\|_{TV}\leq \left(1-\frac56\right)+\left(1-\frac56\right)=\frac13.
$$
Combining these two estimates we see that there exist $t_0>0$ and $r>0$ small enough, so that
$$
\sup_{y_1,y_2\in[1-r,1+r], |x|\leq R}\|P_{t_0}^{(x)}(y_1, \d y')- P_{t_0}^{(x)}(y_2, \d y')\|_{TV}<\frac34;
$$
in the RHS we could actually take any number $>\frac13+\frac13=\frac23$. This proves the local Dobrushin condition in a small ball centered at $y_0=1$. To extend this condition to a large ball $|y|\leq R$, we use another standard argument, based on the support theorem. Namely, $y^{(x)}$ can be represented as an image of a Brownian motion under the time change and the change of measure; see \cite{IkedaWatanabe}. Since the Wiener measure in $C_0(0,\infty)$ has a full topological support, it is easy to show using this representation that, for any $t_1>0$, there exists $\delta>0$ such that 
$$
P_{t_1}^{(x)}(y, [1-r,1+r])\geq \delta, \quad |x|\leq R, \quad |y|\leq R.
$$
Take $h=t_0+t_1$ and for $x,y_1,y_2$ with $|x|\leq R, |y_1|\leq R, |y_2|\leq R$ consider two processes $Y_t^1, Y_t^2$ which start at $y_1,y_2$ respectively, solve \eqref{SDEfrozen} independently up to the time $t_1$, and then provide the maximal coupling probability on the time interval $[t_1,t_1+t_0]$, conditioned on their values at the time $t_1$ (we can construct such a process using the Coupling Lemma for probability kernels, \cite[Theorem~2.2.4]{K_book}.) Then 
$$
\begin{aligned}
\|P_{h}^{(x)}(y_1, \d y')&- P_{h}^{(x)}(y_2, \d y')\|_{TV}\leq \P(Y_h^1\not=Y_h^2)
\\&=\int_{\mathbf{R}^2}\|P_{t_0}^{(x)}(z_1, \d y')- P_{t_0}^{(x)}(z_2, \d y')\|_{TV}P_{h}^{(x)}(y_1, \d z_1)P_{h}^{(x)}(y_2, \d z_2)
\\&\leq 1-P_{t_1}^{(x)}(y_1, [1-r,1+r])P_{t_1}^{(x)}(y_2, [1-r,1+r])
\\&+\int_{[1-r,1+r]^2}\|P_{t_0}^{(x)}(z_1, \d y')- P_{t_0}^{(x)}(z_2, \d y')\|_{TV}P_{h}^{(x)}(y_1, \d z_1)P_{h}^{(x)}(y_2, \d z_2)
\\&\leq 1+\left(-1+\frac34\right)P_{t_1}^{(x)}(y_1, [1-r,1+r])P_{t_1}^{(x)}(y_2, [1-r,1+r])
\\&\leq 1-\frac{\delta^2}{4}.
\end{aligned}$$
for any $|x|\leq R, |y_1|\leq R, |y_2|\leq R$, which completes the proof of $\mathbf{F}_1$. 
}

The invariant probability measure $\pi^{(x)}(\d y)$  equals, see \cite[Exercise 5.40]{Karatzas}:
\[
\pi^{(x)}(dy)= 
c(x)  \left(\exp\left\{  \frac{\phi^+(x, 0)}{  (\beta^+(x,0))^2} \frac{y^{\gamma+1}}{(\gamma+1) } \right\} \1_{y\geq 0}+
\exp\left\{  \frac{\phi^-(x, 0)}{  (\beta^-(x,0))^2}
\frac{y^{\gamma+1}}{(\gamma+1) }  \right\}\1_{y<0} \right) dy,
\]
where
\[c(x)^{-1}=
\int_\bR \left(\exp\left\{  \frac{\phi^+(x, 0)}{  (\beta^+(x,0))^2}  \frac{y^{\gamma+1}}{(\gamma+1) } \right\} \1_{y\geq 0}+
\exp\left\{  \frac{\phi^-(x, 0)}{  (\beta^-(x,0))^2}
\frac{y^{\gamma+1}}{(\gamma+1) }  \right\}\1_{y<0} \right) dy
=
\]
\[
 \frac{\Gamma(\frac{1}{\gamma+1}) }{(\gamma+1) }
\left(\left(\frac{(\gamma+1)(\beta^+(x,0))^2}{\phi^+(x,0)}\right)^{ \frac{1}{\gamma+1}}+
\left(\frac{(\gamma+1)(\beta^-(x,0))^2}{\phi^-(x,0)}\right)^{ \frac{1}{\gamma+1}} \right).
\]
 Condition $\mathbf{H}_6$ is
 satisfied with
 $
 a(x,y) = \psi_+(x, 0)\1_{y>0}+ \psi_-(x, 0)\1_{y<0},  \ \sigma(x,y)=c(x,y,z)=0,
 $
 and  $B=\mbR^d\times\{0\}.$

The averaged coefficient
\[
\overline{a}(x)=\psi^+(x,0) \pi^{(x)}([0,\infty))+ \psi^-(x,0) \pi^{(x)}((-\infty,0))=
\]
\[
\psi^+(x,0)
\frac{\left( \frac{(\beta^+(x,0))^2}{\phi^+(x,0)} \right)^{ \frac{1}{\gamma+1}}}
{\left( \frac{(\beta^-(x,0))^2}{\phi^-(x,0)} \right)^{ \frac{1}{\gamma+1}}+
\left( \frac{(\beta^+(x,0))^2}{\phi^+(x,0)} \right)^{ \frac{1}{\gamma+1}}} +
 \psi^-(x,0)
\frac{\left( \frac{(\beta^-(x,0))^2}{\phi^-(x,0)} \right)^{ \frac{1}{\gamma+1}}}
{\left( \frac{(\beta^-(x,0))^2}{\phi^-(x,0)} \right)^{ \frac{1}{\gamma+1}}+
\left( \frac{(\beta^+(x,0))^2}{\phi^+(x,0)} \right)^{ \frac{1}{\gamma+1}}}
\]
  is Lipschitz continuous, $ \bar b(x)=0$, $ \overline{K}_{(\rho)}(x, \d v)= \overline{K}^{(\rho)}(x , \d v)=0$.
 So, condition
$\mathbf{A}_0 $ holds true and
 the corresponding martingale problem
has a unique solution.

 This with \eqref{eq:Yto0} concludes the proof.

\section{Proof of Theorem \ref{thm:mainAP}} \label{section:proof_AP}
The weak compactness of the family $\{X_\eps, \eps>0\}$ in $\mathbb{D}([0, \infty), \mathbb{R}^d)$ follows, in a standard way, from the negligibility  assumption $\mathbf{H}_0$ and the boundedness assumptions $\mathbf{H}_1,\mathbf{H}_2$. Under the assumptions of the theorem, for any $C^\infty_0$-function $\phi$ the function $L\phi$ is continuous and bounded. Hence, in order to prove that any weak limit point of the family $\{X_\eps, \eps>0\}$ as $\ve\to0$ solves the MP \eqref{MP}, it is enough to show that,  for any $C^\infty_0$-function $\phi$, any
$s_1\dots,s_q<s<t$, and any continuous and bounded function $\Phi:\Re^{d\times q}\to \Re$
\be\label{eq1}
\E^\eps \Phi(X_{\eps}(s_1), \dots, X_\eps(s_q))\left[\phi(X_\eps(t))-\phi(X_\eps(s))-\int_s^tL\phi(X_\eps(r))\, dr\right]\to 0, \quad \eps\to 0,
\ee
we denote by $\E^\eps$ the expectation w.r.t. $\P^\eps$. Denote
\be\label{eq_tilde_X}
\ba
\wt X_\eps(t)&=X_\eps(t)-\xi_\eps(t)
\\&=X_\eps(0)+\int_0^t a^\eps\big(X_\eps(s), Y_\eps(t)\big)\, \d s+
\int_0^t\sigma^\eps\big(X_\eps(s), Y_\eps(s)\big)\, \d B^\eps_s
\\&+ \int_0^t\int_{\mbR^m} c^\eps\big(X_\eps(s-), Y_\eps(s-),u\big)\Big[N^\eps(\d u,\d s)-1_{|u|\leq \rho}\nu^\eps(\d u)ds\Big].
\ea
\ee
Observe that $L\phi$ is a bounded and continuous function. So, by $\mathbf{H}_0$ relation \eqref{eq1} is equivalent to
\be\label{eq1bis}
\E^\eps \Phi(\wt X_{\eps}(s_1), \dots, \wt X_\eps(s_q))\left[\phi(\wt X_\eps(t))-\phi(\wt X_\eps(s))
-\int_s^t L\phi(\wt X_\eps(r))\, dr\right]\to 0, \quad \eps\to 0.
\ee
Denote
$$
b^\eps(x,y)=\sigma^\eps(x,y)(\sigma^\eps(x,y))^*, \quad K_{(\rho)}^\eps(x,y, A)
=\nu^\eps(\{u: |u|\leq \rho,  c^\eps(x,y,u)\in A\}),
$$
$$K^{(\rho),\eps}(x,y, A)=\nu^\eps(\{u: |u|> \rho,  c^\eps(x,y,u)\in A\}),
$$
and
$$
\ba
\mathcal{L}^\eps \phi(x,y)&=\nabla \phi(x)\cdot a^\eps(x,y)+\frac{1}{2}\nabla^2 \phi(x)\cdot b^\eps(x,y)
+\int_{\mbR^m}\Big( \phi(x+v)-\phi(x)-\nabla \phi(x)\cdot v\Big){K}_{(\rho)}^\eps(x, \d v)\\&+\int_{\mbR^m}\Big( \phi(x+v)-\phi(x)\Big){K}^{(\rho),\eps}(x,y, \d v).
\ea
$$
Then by the It\^o formula we have
\be\label{eq:res+mart}
\phi(\wt X_\eps(t))-\phi(\wt X_\eps(s))=\int_s^t\mathcal{L}^\eps\phi(X_\eps(r),Y_\eps(r))\, dr+ (martingale\ part).
\ee
Applying $\mathbf{H}_0$ once again, we get that, to prove \eqref{eq1} and \eqref{eq1bis}, it is enough to prove, for  any
$s_1\dots,s_q<t$,
\be\label{eq1prim}
\E^\eps \Phi( X_{\eps}(s_1), \dots,  X_\eps(s_q))\Big(\mathcal{L}^\eps\phi(X_\eps(t),Y_\eps(t))-L\phi(X_\eps(t))\Big)\to 0, \quad \eps\to 0.
\ee

Before proving \eqref{eq1prim}, we formulate and prove two auxiliary statements.

\subsection{Auxiliaries, I: uniform ergodic rate for the frozen microscopic dynamics}

\begin{prop}\label{p1}
  Let conditions $\mathbf{H}_1 - \mathbf{H}_5,$ $\mathbf{F}_0,\mathbf{F}_1$ hold. {
If $\kappa\in(0,1)$ and $p>0$ are from these conditions}, then for every $R>0$  there exists $C$ such that for any $x,y$ with $|x|\leq R, |y|\leq R$
 \be\label{convTV}
  \|P_t^{(x)}(y,\d y')-\pi^{(x)}(\d y')\|_{TV}\leq Ct^{-\frac{p+\kappa-1}{1-\kappa}}.
  \ee
  If $\kappa\geq 1$, then there exists $a>0$ such that,  for every $R>0$ and  any $x,y$ with $|x|\leq R, |y|\leq R$,
   $$
  \|P_t^{(x)}(y,\d y')-\pi^{(x)}(\d y')\|_{TV}\leq Ce^{-at}
  $$
  with a constant $C$ depending on $R$.
\end{prop}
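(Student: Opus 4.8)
The plan is to establish a Foster--Lyapunov drift inequality for the generator of the frozen equation \eqref{frozen}, with all constants uniform in $x$ on compact sets, and then to feed it, together with the uniform local minorization $\mathbf{F}_1$, into the general subgeometric/geometric ergodicity results of \cite[Section~2]{K_book}. By $\mathbf{F}_0$ each $y^{(x)}$ is a well-defined Feller Markov process; by $\mathbf{H}_3$ the coefficients $A,\Sigma,C$ of \eqref{frozen} are continuous, so the (formal) generator $\mathcal{A}^{(x)}$ acts on smooth functions of polynomial growth; and, since $\mathbf{H}_5$ holds for every $A^\eps$ and $A^\eps(x,y)\to A(x,y)$ pointwise, the limiting drift still satisfies $A(x,y)\cdot y\le -c|y|^{\kappa+1}$ for $|y|\ge r$.

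\textbf{Step 1: the Lyapunov inequality.} Take $V(y)=(1+|y|^2)^{p/2}$, which is smooth, $\ge1$, comparable to $|y|^p$ at infinity, and whose post-jump values are $\mu$-integrable by $\mathbf{H}_2$. Let $C_0$ be the constant from $\mathbf{H}_1$, so $|C(x,y,z)|\le C_0|z|$ and $|\Sigma|\le C_0$. For $|y|$ large one estimates $\mathcal{A}^{(x)}V(y)$ term by term: the drift part is $\nabla V(y)\cdot A(x,y)\le-c'|y|^{p+\kappa-1}$; the diffusion part $\tfrac12\trace(\Sigma\Sigma^*\nabla^2V)$ is $O(|y|^{p-2})$; the small-jump part $\int_{|z|\le1}\big(V(y+C)-V(y)-\nabla V\cdot C\big)\mu(\d z)$ is $O(|y|^{p-2})$, by a second-order Taylor bound and $\int_{|z|\le1}|z|^2\mu(\d z)<\infty$. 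The delicate term is the large-jump part $\int_{|z|>1}\big(V(y+C(x,y,z))-V(y)\big)\mu(\d z)$, which I would split at $|z|=|y|/(2C_0)$: on $\{1<|z|\le|y|/(2C_0)\}$ a mean-value estimate gives an integrand $\lesssim|y|^{p-1}|z|$, and since $\int_{1<|z|\le M}|z|\mu(\d z)\lesssim M^{(1-p)_+}$ (using $\int_{|z|>1}|z|^p\mu(\d z)<\infty$), this contribution is $O\big(|y|^{\max(p-1,0)}\big)$; on $\{|z|>|y|/(2C_0)\}$ one has $V(y)+V(y+C)\lesssim|z|^p$, so this contribution is $o(1)$ as $|y|\to\infty$ by dominated convergence. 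Since $\mathbf{H}_5$ gives $\kappa>0$ and $\kappa+p>1$, the term $-c'|y|^{p+\kappa-1}$ dominates all the others for $|y|$ large, uniformly in $x$ with $|x|\le R$; together with the crude bound $\mathcal{A}^{(x)}V\le b$ for bounded $|y|$ (from $\mathbf{H}_1,\mathbf{H}_3$), one gets $c_0,b_0,R_0>0$, independent of $x$ on the ball $|x|\le R$, with
$$
\mathcal{A}^{(x)}V(y)\le-c_0\,V(y)^{1-\frac{1-\kappa}{p}}+b_0\,\1_{|y|\le R_0}\qquad(\kappa\in(0,1)),
$$
the right-hand side being $-c_0V(y)+b_0\1_{|y|\le R_0}$ when $\kappa\ge1$. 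A standard localization (stopping at exits from large balls, Fatou, Dynkin) turns this into the integral inequality for $\mathbb{E}\,V(y^{(x)}_t)$ and yields non-explosion.

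\textbf{Step 2: from the drift to the rate.} Assumption $\mathbf{F}_1$, used with radius $\ge R_0$, shows via the coupling characterization of total variation that the sublevel set $\{|y|\le R_0\}$ is a small set for the $h$-skeleton of $y^{(x)}$, with a minorization constant uniform in $|x|\le R$. The pair consisting of the drift inequality of Step 1 and this uniform local minorization is exactly the input of the ergodic-rate theorems of \cite[Section~2]{K_book}: they give the existence and uniqueness of the IPM $\pi^{(x)}$ and
$$
\|P_t^{(x)}(y,\d y')-\pi^{(x)}(\d y')\|_{TV}\le C_R\big(1+V(y)\big)\,r(t),
$$
with $r(t)=t^{-\alpha/(1-\alpha)}$, $\alpha=\frac{p+\kappa-1}{p}$, in the case $\kappa\in(0,1)$ --- and $\frac{\alpha}{1-\alpha}=\frac{p+\kappa-1}{1-\kappa}$ --- and $r(t)=e^{-at}$ in the case $\kappa\ge1$, where the Step-1 inequality is a geometric drift condition. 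Restricting to $|y|\le R$ bounds $1+V(y)$ by a constant depending only on $R$, which is precisely \eqref{convTV}.

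\textbf{The main obstacle} is the large-jump part of $\mathcal{A}^{(x)}V$ in Step 1: one must see that the heavy tails of $\mu$ do not overwhelm the stabilizing drift $-c'|y|^{p+\kappa-1}$, and this is exactly where the tail exponent $p$ from $\mathbf{H}_2$ and the balance condition $\kappa+p>1$ from $\mathbf{H}_5$ are used --- they force the choice $V\asymp|y|^p$ and make the drift exponent $p+\kappa-1$ positive, i.e.\ $\alpha\in(0,1)$. The rest --- the uniformity in $x$, the localization, and the passage from drift to rate --- is routine, since the constants in Step 1 depend only on those in $\mathbf{H}_1,\mathbf{H}_2,\mathbf{H}_5$ and on $R$, and $\mathbf{F}_1$ is uniform in $x$ by hypothesis.
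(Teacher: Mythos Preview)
Your proof is correct and follows essentially the same route as the paper: establish a Foster--Lyapunov inequality $\mathcal{A}V\le C_V-a_V V^{(p+\kappa-1)/p}$ (resp.\ $C_V-a_V V$ for $\kappa\ge1$) with constants uniform in $x$, then combine with the local Dobrushin condition $\mathbf{F}_1$ and invoke the subgeometric/geometric rate results of \cite[Section~2]{K_book}.

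The only notable difference is in presentation. The paper outsources your entire Step~1 to \cite[Proposition~2.5]{KulikPavlyukevich}, whereas you sketch the generator computation by hand --- your splitting of the large-jump integral at $|z|=|y|/(2C_0)$ is exactly the argument underlying that cited result, and your identification of the balance condition $\kappa+p>1$ as the crux is spot on. Conversely, the paper is slightly more explicit than you about the passage from the continuous-time drift to the discrete-time Lyapunov inequality for the $h$-skeleton (via \cite[Theorem~3.2.3, Example~3.2.6]{K_book}) before applying \cite[Corollary~2.8.10]{K_book}; your ``standard localization \dots\ turns this into the integral inequality'' compresses this step. Both accounts point to the same machinery in \cite{K_book}, so the approaches are equivalent.
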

\begin{proof}
  The required statement is actually obtained, though not in this precise form, in \cite[Section~3]{K_book}. The difference between the current situation and the one studied in \cite{K_book} is that  the ergodic rates were obtained there for individual processes (while here we have a family indexed by $x$) and separately for diffusions and L\'evy driven SDEs (while here we have both types of the noise involved simultaneously). This difference is not crucial, and we just give a short outline of the argument, referring to \cite{K_book} for details.

  The convergence conditions $\mathbf{H}_3,\mathbf{H}_4$ yield that the bounds from the conditions $\mathbf{H}_1,\mathbf{H}_2$ and the drift condition $\mathbf{H}_5$ remain true for the limiting coefficients $A(x,y), \Sigma(x,y), C(x,y,z)$ and L\'evy measure $\mu(\d u)$. Then we have  the following: if $V\in C^2$ is a function such that $V(y)\geq 1$ and $V(y)=|y|^p, |y|\geq 2$, then for any $x\in \mbR^d$ the
 semimartingale decomposition holds
\be\label{semimart}
 V(y^{(x)}(t))=V(y^{(x)}(0))+\int_{0}^{t}\mathcal{A}V(x,y^{(x)}(s))\, \d s+(martingale\ part),
 \ee
 where the function $\mathcal{A}V(x,y)$ satisfies
\be\label{semimart_bound}
 \mathcal{A}V(x,y)\leq \left\{
                         \begin{array}{ll}
                            C_V-a_V V(y)^{\frac{p+\kappa-1}{p}}, & \kappa\in (0,1); \\
                           C_V-a_V V(y), & \kappa\geq 1
                         \end{array}
                       \right.
 \ee
with some constants $C_V, a_V>0$. For the proof of this statement, see \cite{KulikPavlyukevich}, Proposition 2.5.

Given \eqref{semimart}, \eqref{semimart_bound} we can proceed analogously to \cite[Sections~3.3,3.4]{K_book}. Namely, for $\kappa\in (0,1)$ we use
\cite[Theorem~3.2.3]{K_book} and  \cite[Example~3.2.6]{K_book} to show that
\be\label{skeleton}
\E_y \wt V(y^{(x)}({h}))-\wt V(y)\leq  \wt C_V-\wt c_V \wt V(y)^{\frac{p+\kappa-1}{p}},
\ee
where $h$ is the same as in the assumption $\mathbf{F}_1$, $\wt C_V$, $\wt c_V>0$ are some new constants, and $\wt V$ is a new function which is equivalent to $V$ in the sense that, for some positive constants $c_1,c_2$
$$
c_1V\leq \wt V\leq c_2 V.
$$
Following the  proof  of \cite[Theorem~3.2.3]{K_book} and calculations of \cite[Example~3.2.6]{K_book} line by line, we easily see that, because the constants $C_V, c_V$ in \eqref{semimart_bound} do not depend on $x$, the constants $\wt C_V$, $\wt c_V$, $c_1,c_2$ and the function $\wt V$ can be chosen uniformly for $x\in \mbR^d$.

Inequality \eqref{skeleton} is actually the  \emph{Lyapunov condition} for
 the \emph{skeleton chain} $y^{(x,h)}_k=y^{(x)}({kh}), k\geq 0$ for the process $y^{(x)}$,
see \cite[Section~2.8]{K_book}. Combined with the local Dobrushin condition assumed in
 $\mathbf{F}_1$, we get by \cite[Corollary~2.8.10]{K_book} {for $\kappa\in(0,1)$ the inequality}
$$
 \|P_{kh}^{(x)}(y,\d y')-\pi^{(x)}(\d y')\|_{TV}\leq C(1+k)^{-\frac{p+\kappa-1}{1-\kappa}}\wt V(y),
$$
where we have used the identity
$$
\frac{p+\kappa-1}{p}\left(1-\frac{p+\kappa-1}{p}\right)^{-1}=\frac{p+\kappa-1}{1-\kappa}.
$$
Since Lyapunov condition and the local Dobrushin condition are uniform in $x$, the constant $C$ here
 can be chosen uniformly for $x\in \mbR^d$; one can easily check this following line by line
 the proofs of  \cite[Corollary~2.8.10]{K_book} and the theorems it is
based on: \cite[Theorem~2.7.5]{K_book} and  \cite[Theorem 2.8.6]{K_book}.
 Since the total variation distance $\|P_{t}^{(x)}(y,\d y')-\pi^{(x)}(\d y')\|_{TV}$ is non-increasing
in $t$ and $V(y)$ is locally bounded, this completes the proof of the required statement in the case
$\kappa\in (0,1)$.

For $\kappa\geq 1$, we can argue in a completely analogous way, using
 \cite[Corollary~2.8.3]{K_book}.
  \end{proof}

\subsection{Auxiliaries, II: weak convergence of the microscopic dynamics to the frozen one}
Consider the following \emph{microscopic} analogue of \eqref{DA_fully coupled}.
 {Assume that $(x_\eps, y_\eps)$ is a solution (maybe non-unique) to the equation}
\be\label{DA_fully coupled_micro}
\ba
x_\eps(t)&=x_\eps(0)+\eps\int_0^t a^\eps\big(x_\eps(s), y_\eps(t)\big)\, \d s+
\eps^{1/2}\int_0^t\sigma^\eps\big(x_\eps(s), y_\eps(s)\big)\, \d b^\eps_s
\\&+ \int_0^t\int_{\mbR^m} C^\eps\big(x_\eps(s-), y_\eps(s-),u\big)\Big[n^\eps(\d u,\d s)-1_{|u|\leq \rho}\eps \nu^\eps(\d u)ds\Big]+\zeta^\eps(t),
\\
y_\eps(t)&=y_\eps(0)+\int_0^tA^\eps\big(x_\eps(s), y_\eps(s)\big)\, \d s+\int_0^t\Sigma^\eps\big( x_\eps(s),y_\eps(s)\big)\, \d w_{s}^\eps
\\&+\int_{0}^{t} \int_{\mbR^l} c^\eps\big(x_\eps(s-), y_\eps(s-),z\big)\Big[q^\eps(\d z,\d s)-1_{|z|\leq \rho}\mu^\eps(\d z)\d s\Big],
\ea
\ee
where $b^\eps_t, w^\eps_t$ are Brownian motions and $n^\eps(\d u,\d t), q^\eps(\d z,\d t)$ are
 Poisson point measures on a common filtered probability space
$(\wt \Omega^\eps, \wt \cF^\eps, \wt \P^\eps)$, and the random measures
$n^\eps(\d u,\d t)$, $q^\eps(\d z,\d t)$ have the intensity  measures $\eps \nu^\eps(\d u)\d t$
 and $\mu^\eps(\d z)\d t$, respectively,   {$\zeta^\eps(t)$ is an adapted c\`adl\`ag process}.

System \eqref{DA_fully coupled_micro} naturally appears e.g. if we consider the original system \eqref{DA_fully coupled} at the `microscopic time scale' $\eps t$ with an initial time shift by $t_0$:
\be\label{micro}
x_\eps(t)=X_\eps(t_0+\eps t), \quad y_\eps(t)=Y_\eps(t_0+\eps t), \quad \mbox{and} \quad
 \zeta^\eps(t)=\xi_\eps(t_0+\eps t)-\xi_\eps(t_0).
\ee

For a fixed pair of functions {$(\rho(\eps), \varrho(\eps)) $ such that $\rho(\eps)\to 0 $ and $ \varrho(\eps)\to 0 $ as $\eps\to0,$}
and constants $R>0, T>0$ denote by $\mathcal{K}(\rho, \varrho, R,T)$ the class of all families
$\{(x_\eps, y_\eps), \eps>0\}$ which satisfy  \eqref{DA_fully coupled_micro}
 on \emph{some} probability space with {non-random initial values $x_\eps(0), y_\eps(0),$}  $|x_\eps(0)|\leq R, |y_\eps(0)|\leq R$ and
$$
\wt \P^\eps \left(\sup_{s\leq T}|\zeta_\eps(s)|>\rho(\eps)\right)\leq \varrho(\eps).
$$

\begin{prop}\label{p32} Let conditions $\mathbf{H}_1 - \mathbf{H}_5,$ $\mathbf{F}_0$ hold.
 Then for  any $0<t<T$ and any bounded  continuous function $f:\mbR^d\times\mbR^k\to \mbR$ and $R>0$,
\be\label{bound}
\sup_{\{(x_\eps,y_\eps)\}\in \mathcal{K}(\rho, \varrho, R,T)}\Big|\wt \E^\eps f(x_\eps(t),y_\eps(t))-P_t^{frozen}f(x,y)\Big|_{x=x_\eps(0), y=y_{\eps}(0)}\Big|\to 0, \quad \eps\to 0,
\ee
{  where
$$
P_t^{frozen}f(x,y)=\int_{\mbR^k} f(y')P_t^{(x)}(y, \d y'), \quad t\geq 0.
$$
}
\end{prop}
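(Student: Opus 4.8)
The plan is to prove \eqref{bound} by a compactness-plus-identification argument combined with the uniqueness assumption $\mathbf{F}_0$, together with a uniformity upgrade obtained by contradiction. First I would fix $R,T$ and consider an arbitrary sequence $\{(x_{\eps_n},y_{\eps_n})\}\in\mathcal{K}(\rho,\varrho,R,T)$ with $\eps_n\to0$. The point is that on the microscopic time scale the $y$-equation in \eqref{DA_fully coupled_micro} has $O(1)$ coefficients, while the $x$-equation has drift of order $\eps$, continuous-martingale part of order $\eps^{1/2}$, and a jump part whose intensity is $\eps\nu^\eps(\d u)\,\d t$ — so by $\mathbf{H}_1,\mathbf{H}_2$ and the negligibility of $\zeta_\eps$ in $\mathcal K(\rho,\varrho,R,T)$, the process $x_\eps(\cdot)$ converges uniformly in probability on $[0,T]$ to the constant $x:=\lim x_{\eps_n}(0)$. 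Meanwhile the family of laws of $y_{\eps_n}(\cdot)$ is tight in $\mathbb D([0,T],\mathbb R^k)$: one checks the standard Aldous/compact-containment criteria using the moment bounds from $\mathbf{H}_1,\mathbf{H}_2$ and the drift condition $\mathbf{H}_5$ (the Lyapunov function $V(y)=|y|^p$ for $|y|\ge2$ together with \eqref{semimart}–\eqref{semimart_bound}, read off as in Proposition~\ref{p1}, controls large values of $y$ uniformly). So along a further subsequence $(x_{\eps_n}(\cdot),y_{\eps_n}(\cdot))$ converges weakly in $\mathbb D([0,T],\mathbb R^{d+k})$ to $(x,y(\cdot))$ with $x$ constant.

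Next I would identify the limit. Using $\mathbf{H}_3$ (uniform-on-compacts convergence of $A^\eps,\Sigma^\eps,C^\eps$ to the continuous limits $A,\Sigma,C$) and $\mathbf{H}_4$ (weak convergence $\nu^\eps\Rightarrow\nu$, $\mu^\eps\Rightarrow\mu$, with the cutoff level $\rho$ a continuity point), a standard martingale-problem passage to the limit — testing against $C_0^\infty$ functions, handling the small jumps via the $|z|^2\wedge1$ bound in $\mathbf{H}_2$ and the large jumps via weak convergence of the finite measures $\mu^\eps(\cdot\cap\{|z|>\delta\})$ — shows that any weak limit $y(\cdot)$ solves the martingale problem associated with the frozen generator of \eqref{frozen} at the frozen value $x$, started from $y:=\lim y_{\eps_n}(0)$. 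Here one also needs that the compensator $1_{|z|\le\rho}\mu^\eps(\d z)\,\d s$ in the prelimit matches $1_{|z|\le1}\mu(\d z)\,\d s$ in \eqref{frozen} up to the bounded-variation term $\int_{\rho<|z|\le1}C(\cdot)\mu(\d z)$, which is absorbed into the drift; this is precisely the role of the cutoff bookkeeping. By $\mathbf{F}_0$ the frozen martingale problem is well posed, so the limit law is exactly $\Law(y^{(x)}(\cdot))$ with $y^{(x)}(0)=y$, hence $\wt\E^\eps f(x_{\eps_n}(t),y_{\eps_n}(t))\to \E f(x,y^{(x)}(t))=P_t^{frozen}f(x,y)\big|_{x=x_{\eps_n}(0),y=y_{\eps_n}(0)}$, using continuity of $f$, convergence of $x_{\eps_n}(t)\to x$, and — to handle the potential fixed-time discontinuity of $y(\cdot)$ — the fact that the frozen diffusion-with-jumps has no fixed times of discontinuity, so $y_{\eps_n}(t)\Rightarrow y^{(x)}(t)$ for each fixed $t$.

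Finally, to get the claimed \emph{uniform} convergence over the whole class $\mathcal K(\rho,\varrho,R,T)$, I would argue by contradiction: if \eqref{bound} failed, there would exist $c_0>0$, $\eps_n\to0$, and families $\{(x_{\eps_n},y_{\eps_n})\}\in\mathcal K(\rho,\varrho,R,T)$ (on possibly different probability spaces) with $\big|\wt\E^{\eps_n} f(x_{\eps_n}(t),y_{\eps_n}(t))-P_t^{frozen}f(x_{\eps_n}(0),y_{\eps_n}(0))\big|\ge c_0$. Since $|x_{\eps_n}(0)|\le R$, $|y_{\eps_n}(0)|\le R$, pass to a subsequence along which the initial values converge; the tightness argument above (whose bounds depend only on $R,T$ through $\mathbf{H}_1,\mathbf{H}_2,\mathbf{H}_5$, not on the particular family) still applies, and the identification step gives a contradiction, also using that $P_t^{frozen}f$ is continuous by the Feller property assumed in $\mathbf{F}_0$, so $P_t^{frozen}f(x_{\eps_n}(0),y_{\eps_n}(0))$ converges to the right limit.

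The main obstacle I expect is the identification step in the presence of the ``extra'' terms that are specific to this setup: the residual process $\zeta^\eps$ (only controlled in the weak sense $\wt\P^\eps(\sup_{s\le T}|\zeta_\eps|>\rho(\eps))\le\varrho(\eps)$, not pathwise), the mismatch between the prelimit cutoff $\rho$ and the limiting cutoff $1$ in \eqref{frozen}, and the fact that $\nu^\eps$ may have infinite mass so the jump terms of both components must be split into a truncated (compensated, $L^2$-controlled) part and a finitely-many-large-jumps part before passing to the limit. Verifying tightness of $y_\eps$ uniformly in the family — i.e.\ that the compact-containment constant depends only on $R$ — is the other delicate point, and it is exactly where the Lyapunov bound \eqref{semimart_bound} with $x$-independent constants is used.
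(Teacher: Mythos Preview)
Your proposal is correct and follows essentially the same approach as the paper's own proof: argue by contradiction, pass to a subsequence along which the deterministic initial values converge, show $x_\eps(\cdot)\to x_*$ uniformly in probability, establish tightness of $\{y_{\eps_n}\}$ in $\mathbb D([0,T],\mbR^k)$, identify any weak limit as a solution of the frozen SDE \eqref{frozen} via $\mathbf{H}_3,\mathbf{H}_4$ and invoke well-posedness from $\mathbf{F}_0$, and use that the frozen process has no fixed time of discontinuity at $t$. The only notable technical difference is in the tightness step: the paper truncates large jumps at a level $Q$ and then obtains an $L^2$ bound $\wt\E^{\eps_n}\sup_{s\in[0,T]}|y_{\eps_n,Q}(s)|^2<\infty$ directly from \cite[Proposition~2.5]{KulikPavlyukevich}, whereas you propose the Lyapunov function $V(y)=|y|^p$ with the bound \eqref{semimart_bound}; both routes work, and yours is arguably closer to what is actually needed when $p$ is small. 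Your explicit mention of the Feller property of $P_t^{frozen}$ to ensure $P_t^{frozen}f(x_{\eps_n}(0),y_{\eps_n}(0))\to P_t^{frozen}f(x_*,y_*)$ and of the cutoff bookkeeping between $\rho$ and $1$ fills in details the paper leaves implicit.
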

\begin{proof} Assuming the contrary, we will have that there exists a sequence $x_{\eps_n}(\cdot),y_{\eps_n}(\cdot)$ of solutions to  \eqref{DA_fully coupled_micro} with $|x_{\eps_n}(0)|\leq R, |y_{\eps_n}(0)|\leq R$ such that
\be\label{asump}
\Big(\wt \E^{\eps_n} f(x_{\eps_n}(t),y_{\eps_n}(t))-P_t^{(x)}f(x,y)\Big|_{x=x_{\eps_n}(0), y=y_{\eps_n}(0)}\Big)\not \to 0, \ \ n\to\infty.
\ee
Without loss of generality, after passing to a subsequence, we can assume that  $x_{\eps_n}(0)\to x_*$ and
$y_{{\eps_n}}(0)\to y_*$ as $n\to\infty$. Then it is easy to show that, for any $c>0$,
\be\label{x_conv}
\lim_{n\to\infty}\wt \P^{\eps_n}\left(\sup_{s\in[0, T]}|x_{\eps_n}(s)-x_*|>c\right)= 0.
\ee
Next, denote by $\P^*$ the law in $\mathbf{D}([0,T], \mbR^k)$ of $y^{(x_*)}$ with $y^{(x_*)}(0)=y_*$.
Since the $\P^*$-probability for $y(\cdot)$ to have a jump at the point $t$ is $0$,
 the function $F(y(\cdot))=f(y(t))$ is a.s. continuous on $\mathbf{D}([0,T], \mbR^k)$. Thus, in order to prove that  \eqref{asump} fails, it is enough to show that the laws of  $y_{\eps_n}, n\geq 1$ weakly converge in $\mathbf{D}([0,T], \mbR^k)$ to $\P^*$. Such a statement is quite standard, and we just outline its proof here.

By \eqref{x_conv},  the continuity assumption $\mathbf{H}_3$,  and convergence of the noise
 \noindent$\mathbf{H}_4$ it is easy to prove that any weak limit point to $\{y_{\eps_n}\}$
solves \eqref{frozen}.
By the weak uniqueness assumption $\mathbf{F}_0$, this yields that
 any weak limit point to $\{y_{\eps_n}\}$ has the law $\P^*$.

That is, to prove the required weak convergence it is enough to prove that $\{y_{\eps_n}\}$ is weakly compact in $\mathbf{D}([0,T], \mbR^k)$.

 To prove the weak compactness, we use $L_2$-moment bounds for the increments of the process $y_{\eps_n}$ combined with a truncation of the large jumps. Namely, by $\mathbf{H}_2$ for any fixed $\delta>0$ there exists $Q_\delta$ such that
$$
\wt \P^\eps\big(N([0,T]\times \{|z|>Q_\delta\})>0\big)<\delta, \quad \eps>0.
$$
Thus it is enough to prove weak compactness for every `truncated' family  $\{y_{\eps_n, Q}\}, Q>0$, where $y_{\eps, Q}$ satisfies an analogue of \eqref{DA_fully coupled_micro} with the integral for $q^\eps$ taken over $\{|z|\leq Q\}$ instead of $\mbR^l$. For such a `truncated' family,
applying \cite[Proposition 2.5]{KulikPavlyukevich} we get that
\be\label{semimart_2}
 |y_{\eps_n,Q}(s)|^2=|y_{\eps_n,Q}(0)|^2+\int_{0}^{s}H(r)\, \d r+(martingale\ part),
 \ee
where $H$ is bounded.  Combining this with the maximal  martingale inequality, we get that
$$
\wt \E^{\eps_n}\sup_{s\in [0,T]}|y_{\eps_n,Q}(s)|^2
$$
is bounded. Since the coefficient $A^\eps(x,y)$ is  bounded locally in $y$, the above bound and the (uniform) bounds for $C^\eps$, $\mu^\eps$ from
 $\mathbf{H}_1$, $\mathbf{H}_2$ yield the required weak compactness of  $\{y_{\eps_n}\}$. Summarizing all the above, we have that $\{y_{\eps_n}\}$ weakly converges to $\P^*$. Combined with \eqref{x_conv}, this contradicts to \eqref{asump} and  proves the required statement.
\end{proof}

\subsection{End of the proof of Theorem \ref{thm:mainAP}}

{In this subsection we complete the proof of \eqref{eq1prim}. This will conclude
proof of the Theorem.}

Denote
\be\label{El}
\ba
\mathcal{L} \phi(x,y)
&
=\nabla \phi(x)\cdot a(x,y)+\frac{1}{2}\nabla^2 \phi(x)\cdot b(x,y)
+\int_{\mbR^m}\Big( \phi(x+v)-\phi(x)-\nabla \phi(x)\cdot v\Big)
{K}_{(\rho)}(x, y, \d v)\\
 & +\int_{\mbR^m}\Big( \phi(x+v)-\phi(x)\Big){K}^{(\rho)}(x,y, \d v)=
 \ea
\ee
\[
\ba
  \nabla \phi(x)\cdot a(x,y)+\frac{1}{2}\nabla^2 \phi(x)\cdot b(x,y)
+& \\
 & \int_{\mbR^m} \Big( \phi(x+c(x,y,u))-\phi(x)-\nabla \phi(x)\cdot c(x,y,u) \1_{|u|\leq \rho}\Big)
\nu(du).
\ea
\]
Next, since the set $B$ from the condition  $\mathbf{H}_6$ is open, there exists a sequence of continuous functions $\chi_j(x,y), j\geq 1$ such that
\begin{itemize}
  \item[(i)] $0\leq \chi_j(x,y)\leq 1, j\geq 1$;
  \item[(ii)] each $\chi_j$ has a support compactly embedded to $B$;
  \item[(iii)] for each $x,y$, $\chi_j(x,y)\nearrow\chi_\infty(x,y)=1_B(x,y), j\to \infty$.
\end{itemize}

{Recall the notation $\overline{f}(x)=\int_{\Re^k}f(x,y)\pi^{(x)}(\d y)$.}

The following lemma collects several simple statements used in the proof.

\begin{lem}\label{l1} The following properties hold:
\begin{itemize}
  \item[(a)] { there exists $C>0$ such that}  $|\mathcal{L}^\eps \phi(x,y)|\leq C, \eps>0 $ for all $x,y$;
  \item [(b)] $\mathcal{L}^\eps \phi\to \mathcal{L}\phi, \eps\to 0$ uniformly on each compactum $K\subset B$;
   \item[(c)]  { there exists $C>0$ such that} $|\mathcal{L}\phi(x,y)|\leq C $ for all $ (x,y)\in B$;
\item[(d)] $\overline{\chi_j}(x)\to 1, j\to \infty$ uniformly on $\{|x|\leq R\}$ for each $R>0$;
\item[(e)] $\overline{\chi_j\mathcal{L}\phi}(x)\to L\phi(x), j\to \infty$
uniformly on $\{|x|\leq R\}$ for each $R>0$, { where $L$ is defined in \eqref{MP}};
\item[(f)] for any $T>0$,
$$
\E^\eps|\wt X_\eps(t)-\wt X_\eps(s)|^2\wedge 1\leq C|t-s|,\quad  s,t\in [0,T],
$$
where $\wt X$ is defined in \eqref{eq_tilde_X},

 and
$$
\sup_{t\in [0, T], \eps>0}\P^\eps(|X_\eps(t)|>R)\to 0, \quad R\to \infty;
$$
\item[(g)] for any $T>0$,
$$
\sup_{t\in [0, T], \eps>0}\P^\eps(|Y_\eps(t)|>R)\to 0, \quad R\to \infty.
$$
\end{itemize}

\end{lem}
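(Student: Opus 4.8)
The plan is to verify the seven items essentially independently; all are routine, and below I indicate the ingredient that drives each one. \emph{Items (a) and (c)} are elementary consequences of $\phi\in C_0^\infty$ and the bounds $\mathbf{H}_1,\mathbf{H}_2$: Taylor's formula gives $|\phi(x+v)-\phi(x)-\nabla\phi(x)\cdot v|\le C|v|^2$ and $|\phi(x+v)-\phi(x)|\le 2\|\phi\|_\infty$, so the drift and diffusion terms in $\mathcal{L}^\eps\phi$ are bounded by $\mathbf{H}_1$, while the two jump terms are bounded using $|c^\eps(x,y,u)|\le C|u|$, the bound $\int_{\mbR^m}(|u|^2\wedge1)\,\nu^\eps(\d u)\le C$ from $\mathbf{H}_2$ (which also forces $\nu^\eps(\{|u|>\rho\})$ to be bounded uniformly in $\eps$), and the analogous estimate for the tail part. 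Applying the identical computation to the limiting coefficients $a,\sigma,c$ and $\nu$, which inherit the $\mathbf{H}_1,\mathbf{H}_2$ bounds through $\mathbf{H}_4,\mathbf{H}_6$, gives (c).

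\emph{Item (b)} is the convergence of integro-differential operators on a compactum $K\subset B$. The drift and diffusion parts converge by the uniform convergences $a^\eps\to a$, $\sigma^\eps\to\sigma$ (hence $b^\eps=\sigma^\eps(\sigma^\eps)^\ast\to b$) of $\mathbf{H}_6$. For the integral parts I would truncate the jumps at a radius $\theta$ chosen as a continuity level of $\nu$: on $\{\theta<|u|\le\rho\}$ and on $\{|u|>\rho\}$ the integrands converge uniformly in $(x,y)\in K$ by $\mathbf{H}_6$ and form an equicontinuous, uniformly bounded family on the relevant annuli, so the weak convergence $\nu^\eps\Rightarrow\nu$ from $\mathbf{H}_4$ transfers to these pieces; the small-jump remainder $\int_{|u|\le\theta}(\cdots)\,\nu^\eps(\d u)$ is dominated by $C\int_{|u|\le\theta}|u|^2\,\nu^\eps(\d u)$, which is made negligible by first letting $\eps\to0$ against a smaller continuity level and then $\theta\to0$, using $\mathbf{H}_2$ and the fact that $\nu$ is a L\'evy measure.

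\emph{Items (d) and (e)} involve the family of IPMs. Since $\chi_j\nearrow1_B$ and $\pi^{(x)}(\Delta)=0$ by $\mathbf{H}_6$, monotone convergence gives $\overline{\chi_j}(x)=\int\chi_j(x,y)\,\pi^{(x)}(\d y)\nearrow\pi^{(x)}(B)=1$ for each $x$. To upgrade this to uniformity on $\{|x|\le R\}$ I would note that $P_t^{frozen}\chi_j$ is continuous by the Feller property $\mathbf{F}_0$, while for fixed $y$ with $|y|\le R$ one has $P_t^{frozen}\chi_j(x,y)\to\overline{\chi_j}(x)$ as $t\to\infty$ uniformly for $|x|\le R$ by the uniform ergodic rate of Proposition \ref{p1}; hence $x\mapsto\overline{\chi_j}(x)$ is continuous, and Dini's theorem applied to this increasing sequence of continuous functions converging to the continuous limit $1$ on the compact $\{|x|\le R\}$ yields (d). For (e), Fubini together with the definitions of $\overline{a},\overline{b},\overline{K}_{(\rho)},\overline{K}^{(\rho)}$ (using the bound from (c) for integrability) shows $\int_{\mbR^k}\mathcal{L}\phi(x,y)\,\pi^{(x)}(\d y)=L\phi(x)$ with $L$ as in \eqref{MP}; then $|\overline{\chi_j\mathcal{L}\phi}(x)-L\phi(x)|=|\overline{(1_B-\chi_j)\mathcal{L}\phi}(x)|\le C\big(1-\overline{\chi_j}(x)\big)$ by (c), and the right-hand side tends to $0$ uniformly on $\{|x|\le R\}$ by (d).

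\emph{Items (f), (g), and the main obstacle.} For (f) I would use the decomposition \eqref{eq_tilde_X}: the drift increment is $O(|t-s|)$ by $\mathbf{H}_1$, the continuous martingale increment has second moment $O(|t-s|)$ by the It\^o isometry and $\mathbf{H}_1$, and for the compensated jump part one separates the jumps of size $>\rho$ (finitely many in mean, since $\nu^\eps(\{|u|>\rho\})$ is bounded) from the small ones: on the event of no large jump the increment has second moment $O(|t-s|)$, the large-jump event has probability $O(|t-s|)$, and the truncation by $1$ gives $\E^\eps\big(|\wt X_\eps(t)-\wt X_\eps(s)|^2\wedge1\big)\le C|t-s|$; boundedness in probability of $X_\eps(t)=\wt X_\eps(t)+\xi_\eps(t)$, uniformly in $t\in[0,T]$ and $\eps$, then follows from this estimate (with $s=0$), from $X_\eps(0)\to x^0$, and from $\mathbf{H}_0$. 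For (g) I would exploit the fast time scale: a Lyapunov function $V$ with $V\ge1$, $V(y)=|y|^p$ for $|y|\ge2$, together with the pointwise drift bound behind \eqref{semimart_bound} (the content of \cite[Proposition~2.5]{KulikPavlyukevich}, already invoked in the proof of Proposition \ref{p1}) and the factor $\eps^{-1}$ in front of the drift of $Y_\eps$, yields, via It\^o's formula and a standard localization, a bound on $\E^\eps\big[V(Y_\eps(t))\mid\cF_0\big]$ in terms of $V(Y_\eps(0))$ that is uniform in $t\in[0,T]$ and $\eps$; splitting on a large ball $\{|Y_\eps(0)|\le M\}$, using that $\{Y_\eps(0)\}$ is bounded in probability, and applying Markov's inequality gives (g). The only genuinely delicate point is item (b) --- making the small-jump truncation error uniform both in $\eps$ and in $(x,y)\in K$ --- and, to a lesser extent, the observation in (d) that continuity of $x\mapsto\overline{\chi_j}(x)$ comes for free from $\mathbf{F}_0$ and Proposition \ref{p1} rather than needing to be postulated.
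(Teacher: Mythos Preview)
Your proposal is correct and follows essentially the same route as the paper's proof: items (a)--(c) from the bounds $\mathbf{H}_1,\mathbf{H}_2$ (the paper derives (c) as a limit of (a) via (b), you do it directly --- same content); item (d) via Dini's theorem, with continuity of $\overline{\chi_j}$ coming from the Feller property $\mathbf{F}_0$ (the paper states this in one line, you spell out the passage through $P_t^{frozen}\chi_j$ and Proposition~\ref{p1}, which is the natural way to make that line precise); item (e) via $|\overline{\chi_j\mathcal{L}\phi}-L\phi|\le C(1-\overline{\chi_j})$; item (f) via truncation of large jumps. The only cosmetic difference is (g): the paper invokes \cite[Theorem~2.8]{KulikPavlyukevich} to get a uniform moment bound $\sup_{\tau,\eps}\E^\eps|y_\eps(\tau)|^{p_Y}<\infty$ for $p_Y<p+\kappa-1$ after passing to microscopic time, whereas you sketch the Lyapunov argument behind that theorem --- but this is the same mechanism.
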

\begin{proof} Statement (a) follows directly from the assumptions $\mathbf{H}_1$, $\mathbf{H}_2$.   Statement (b) can be derived, in a standard way, using the convergence assumptions $\mathbf{H}_4$, $\mathbf{H}_6$  and the bounds from the  assumptions $\mathbf{H}_1$, $\mathbf{H}_2$.
Statement (c) follows from (a) and (b).

To prove statement (d), we first mention that each function $\overline{\chi_j}$ is continuous by the assumption $\mathbf{F}_0$. These functions converge monotonously, at each $x\in \mbR^d$,  to the function
$$
\overline{\chi_\infty}(x)=\int_{\mbR^k}1_{B}(x,y)\pi^{(x)}(dy)\equiv 1,
$$
where the last identity holds by the assumption $\mathbf{H}_6$. Then the required uniform convergence follow by the Dini theorem.

To prove statement (e), we first use statements (c) and (d) to get
$$
|\overline{\chi_j \mathcal{L}\phi}(x)-\overline{\mathcal{L}\phi}(x)|=|\overline{\chi_j\mathcal{L} \phi}(x)
-\overline{\chi_\infty\mathcal{L  }\phi}(x)|\leq C(1-\overline{\chi_j}(x))\to 0, \quad j\to \infty
$$
uniformly for $x$ with  $|x|\leq R$. Then the required statement follows by the identity
$$\ba
\overline{\mathcal{L}\phi}(x)&=\int_{\mbR^k} \mathcal{L}\phi(x,y)\pi^{(x)}(dy)
 =\int_{\mbR^k}\left(\nabla \phi(x)\cdot a(x,y)+\frac{1}{2}\nabla^2 \phi(x)\cdot b(x,y)\right)\pi^{(x)}(dy)
\\&\hspace*{1cm}+\int_{\mbR^k}\int_{\mbR^m}\Big( \phi(x+v)-\phi(x)-\nabla \phi(x)\cdot v\Big){K}_{(\rho)}(x, \d v)\pi^{(x)}(dy)\\&\hspace*{1cm}+\int_{\mbR^k}\int_{\mbR^m}\Big( \phi(x+v)-\phi(x)\Big){K}^{(\rho)}(x,y, \d v)\pi^{(x)}(dy)
\\&=\nabla \phi(x)\cdot \overline{a}(x)+\frac{1}{2}\nabla^2 \phi(x)\cdot  \overline{b}(x)
+\int_{\mbR^m}\Big( \phi(x+v)-\phi(x)-\nabla \phi(x)\cdot v\Big)\overline{K}_{(\rho)}(x, \d v)\\&\hspace*{1cm}+\int_{\mbR^m}\Big( \phi(x+v)-\phi(x)\Big)\overline{K}^{(\rho)}(x, \d v)
\\&=L\phi(x).
\ea$$
Statement (f) can be obtained using the same `truncation of large jumps' argument as in the proof of Proposition \ref{p32} and the bounds from the assumptions $\mathbf{H}_1$, $\mathbf{H}_2$; we omit the details.

To prove statement (g), we treat $Y_\eps(t)$ as the value of the process
{$y_\eps $ from \eqref{DA_fully coupled_micro}
 taken at the (large) time instant $\tau=\eps^{-1}t$ with $\zeta_\eps(\tau) = \xi_\eps(\eps \tau)$,
 i.e.,
 $y_\eps(\tau)= Y_\eps(\eps \tau)$.} Without loss of generality we can assume that the constant $\kappa$ in the assumption  $\mathbf{H}_5$ satisfies $\kappa\leq 1$. Then by \cite[Theorem~2.8]{KulikPavlyukevich}, for every $p_Y<p+\kappa-1$,
$$
\sup_{\tau\geq 0, \eps>0}\E^\eps|y^\eps(\tau)|^{p_Y}<\infty,
$$
here we have used that the initial values $y_\eps(0)=Y_\eps(0)$ are bounded. This immediately yields (g).

\end{proof}

Now we are ready to prove \eqref{eq1prim}. Fix $N>0$, and write denote by $\P^\eps_{t-\eps N}, \E^\eps_{t-\eps N}$ the conditional probability and conditional expectation w.r.t. $\cF_{t-\eps N}^\eps$. For $\eps$ small enough, we have $s_q<t-\eps N$ and thus
$$\ba
\E^\eps \Phi( X_{\eps}(s_1), \dots,  X_\eps(s_q))&\Big(\mathcal{L}^\eps\phi(X_\eps(t),Y_\eps(t))-L\phi(X_\eps(t))\Big)
\\&=\E^\eps \Phi( X_{\eps}(s_1), \dots,  X_\eps(s_q))\E^\eps_{t-\eps N}\Big(\mathcal{L}^\eps\phi(X_\eps(t),Y_\eps(t))-L\phi(X_\eps(t))\Big).
\ea
$$
By the assumption $\mathbf{H}_0, $ there exist functions $\rho(\eps)\to 0, \varrho(\eps)\to 0$ such that
$$
\P^\eps\left(\sup_{s\in [0,T]}|\xi_\eps(s)|>\rho(\eps)\right)\leq \varrho(\eps),
$$
here $T>t$ is a fixed number. For a given $R>0$, consider the $\cF_{t-\eps N}^\eps$-measurable set
$$
\Omega_{t,N,R}^\eps=\left\{\omega: \P^\eps_{t-\eps N}\left(\sup_{s\in [0,T]}|\xi_\eps(s)|>\rho(\eps)\right)\leq R\varrho(\eps)\right\},
$$
then
by the Markov inequality 
$$\ba
\P^\eps(\Omega^\eps\setminus\Omega_{t,N,R}^\eps)&
\leq \frac{1}{ R\varrho(\eps)}\E^\eps
\left[
\P^\eps_{t-\eps N}\left(\sup_{s\in [0,T]}|\xi_\eps(s)|>\rho(\eps)\right)\right]=\frac{1}{ R\varrho(\eps)}\P^\eps \left(\sup_{s\in [0,T]}|\xi_\eps(s)|>\rho(\eps)\right)
\\&\leq \frac{\varrho(\eps)}{ R\varrho(\eps)}= \frac{1}{R}.
\ea$$
We have seen in the proof of Lemma \ref{l1} that $L\phi=\overline{\chi_\infty \mathcal{L}\phi}$,
 thus by statement (c) of this lemma the function  $L\phi$ is bounded.
The functions $\Phi, \mathcal{L}^\eps$ are bounded, as well, hence
\be\label{step1}
\ba
\Big|\E^\eps \Phi( X_{\eps}(s_1), \dots,  X_\eps(s_q))&\Big(\mathcal{L}^\eps\phi(X_\eps(t),Y_\eps(t))-L\phi(X_\eps(t))\Big)\Big|
\\&\leq C\P^\eps(|X_\eps(t-\eps N)|>R)+C\P^\eps(|Y_\eps(t-\eps N)|>R)+\frac{C}{R}
\\&+C\E^\eps 1_{\wt \Omega_{t,N,R}^\eps}\left|\E^\eps_{t-\eps N}\Big(\mathcal{L}^\eps\phi(X_\eps(t),Y_\eps(t))-L\phi(X_\eps(t))\Big)\right|,
\ea
\ee
where we denote
$$
\wt \Omega_{t,N,R}^\eps=\Omega_{t,N,R}^\eps\cap\{|X_\eps(t-\eps N)|\leq R, |Y_\eps(t-\eps N)|\leq R\}.
$$
Fix $j\geq 1$, decompose
\be\label{decomp}
\ba
\E^\eps_{t-\eps N}&\Big(\mathcal{L}^\eps\phi(X_\eps(t),Y_\eps(t))-L\phi(X_\eps(t))\Big)
\\&= \E^\eps_{t-\eps N}\Big(\mathcal{L}^\eps\phi(X_\eps(t),Y_\eps(t))-\mathcal{L}^\eps\phi(X_\eps(t),Y_\eps(t))\chi_j(X_\eps(t),Y_\eps(t))\Big)
\\&+ \E^\eps_{t-\eps N}\Big(\mathcal{L}^\eps\phi(X_\eps(t),Y_\eps(t))-\mathcal{L}\phi(X_\eps(t),Y_\eps(t))\Big)\chi_j(X_\eps(t),Y_\eps(t))
\\&+ \left(\E^\eps_{t-\eps N}\mathcal{L}\phi(X_\eps(t),Y_\eps(t))\,
\chi_j(X_\eps(t),Y_\eps(t))-P_N^{frozen}( \chi_j \mathcal{L}\phi)(X_\eps(t-\eps N),Y_\eps(t-\eps N))\right)
\\& +\Big(P_N^{frozen}( \chi_j \mathcal{L}\phi)(X_\eps(t-\eps N),Y_\eps(t-\eps N))-
\overline{\chi_j \mathcal{L}\phi }(X_\eps(t-\eps N))\Big)
\\&+ \Big(\overline{\chi_j \mathcal{L}\phi }(X_\eps(t-\eps N))-L\phi(X_\eps(t-\eps N))\Big)
\\&+ \Big(L\phi(X_\eps(t-\eps N))-L\phi(X_\eps(t))\Big).
\ea\ee

 Let us estimate each term in the decomposition \eqref{decomp}. For the first term, we simply write using Lemma \ref{l1}  (a)
\be\label{jeden}
|\E^\eps_{t-\eps N}\Big(\mathcal{L}^\eps\phi(X_\eps(t),Y_\eps(t))-\mathcal{L}^\eps\phi(X_\eps(t),Y_\eps(t))\chi_j(X_\eps(t),Y_\eps(t))\Big)|
\leq C\E^\eps_{t-\eps N}\Big(1-\chi_j(X_\eps(t),Y_\eps(t))\Big)
\ee
For the
second term, we recall that the support of $\chi_j$ is compactly embedded to $B$, thus by Lemma \ref{l1} (b)
\be\label{dwa}
\ba
\Big|\E^\eps_{t-\eps N}\Big(\mathcal{L}^\eps\phi(X_\eps(t),Y_\eps(t))&-\mathcal{L}\phi(X_\eps(t),Y_\eps(t))\Big)\chi_j(X_\eps(t),Y_\eps(t))\Big|
\\&\leq \sup_{x,y}\Big|\mathcal{L}^\eps\phi(x,y)-\mathcal{L}\phi(x,y))\Big|\chi_j(x,y)\to 0, \quad \eps\to 0.
\ea\ee
To estimate the third term in \eqref{decomp},  observe first that the function  $\chi_j \mathcal{L}\phi$ is continuous, which folows from $\mathbf{H}_4,$ $\mathbf{H}_6$ similarly to Lemma \ref{l1} (b). Next,   define the  pair
$x_\eps, y_\eps$ by \eqref{micro} with $t_0=t-\eps N$ and take
 $\wt \P^\eps=\P^\eps_{t-\eps N, \omega}$, the regular version of the conditional probability$.$ Then, for a.a. $\omega\in \wt \Omega_{t,N,R}^\eps$, the pair $x_\eps, y_\eps$ w.r.t. the probability $\P^\eps_{t-\eps N, \omega}$ belongs to the class $\mathcal{K}(\rho, 2R\varrho, R,2N)$ in the notation introduced before Proposition \ref{p32}.  Applying this Proposition, we get
\be\label{trzy}
\E^\eps1_{\wt \Omega_{t,N,R}^\eps}\left|\E^\eps_{t-\eps N}\mathcal{L}\phi(X_\eps(t),Y_\eps(t))\,
 \chi_j(X_\eps(t),Y_\eps(t))-P_N^{frozen}(\chi_j \mathcal{L}\phi )(X_\eps(t-\eps N),Y_\eps(t-\eps N))\right|\to 0,
 \quad \eps\to 0.
\ee
To estimate the fourth term, we use Proposition \ref{p1}; without loss of generality we assume that $\kappa<1$. Since the function $\chi_j \mathcal{L}\phi $ is bounded, Proposition \ref{p1} yields
\be\label{cztery}
\E^\eps1_{\wt \Omega_{t,N,R}^\eps}\Big|P_N^{frozen}(\chi_j \mathcal{L}\phi )(X_\eps(t-\eps N),Y_\eps(t-\eps N))-\overline{\chi_j \mathcal{L}\phi}(X_\eps(t-\eps N))\Big|\leq CN^{-\frac{p+\kappa-1}{1-\kappa}}.
\ee
For the fifth term, we have simply
\be\label{piac}
\E^\eps1_{\wt \Omega_{t,N,R}^\eps}\Big|(\overline{\chi_j \mathcal{L}\phi }(X_\eps(t-\eps N))-L\phi(X_\eps(t-\eps N))\Big|\leq \sup_{|x|\leq R}|\overline{ \chi_j \mathcal{L}\phi}(x)- L\phi(x)|.
\ee
For the sixth term, we have simply
\be\label{szesc}
\E^\eps 1_{\wt \Omega_{t,N,R}^\eps} \Big|L\phi(X_\eps(t-\eps N))-L\phi(X_\eps(t))\Big|\to 0, \quad \eps\to 0
\ee
by Lemma \ref{l1},(f) and uniform continuity of  $L\phi$ on compacts. Summarizing the estimates \eqref{step1} and \eqref{jeden} -- \eqref{szesc}, we get
\be\label{step2}
\ba
\limsup_{\eps\to 0}|&\E^\eps \Phi( X_{\eps}(s_1), \dots,  X_\eps(s_q))\Big(\mathcal{L}^\eps\phi(X_\eps(t),Y_\eps(t))-L\phi(X_\eps(t))\Big)|
\\&\leq C\sup_{s\leq t, \eps>0}\P^\eps(|X_\eps(s)|>R)+C\sup_{s\leq t, \eps>0}\P^\eps(|Y_\eps(s)|>R)+\frac{C}{R}
\\&+CN^{-\frac{p+\kappa-1}{1-\kappa}}+\sup_{|x|\leq R}|\overline{\mathcal{L}\phi \chi_j}(x)- L\phi(x)|
\\&+C \limsup_{\eps\to 0}\E^\eps 1_{\wt \Omega_{t,N,R}^\eps} E^\eps_{t-\eps N}\Big(1-\chi_j(X_\eps(t),Y_\eps(t))\Big)
\ea\ee
Similarly to \eqref{trzy} -- \eqref{piac}, we have
$$
\limsup_{\eps\to 0}\E^\eps 1_{\wt \Omega_{t,N,R}^\eps} E^\eps_{t-\eps N}\Big(1-\chi_j(X_\eps(t),Y_\eps(t))\Big)\leq
CN^{-\frac{p+\kappa-1}{1-\kappa}}+\sup_{|x|\leq R}(1-\overline{\chi_j}(x)),
$$
thus
\be\label{step3}
\ba
\limsup_{\eps\to 0}|&\E^\eps \Phi( X_{\eps}(s_1), \dots,  X_\eps(s_q))\Big(\mathcal{L}^\eps\phi(X_\eps(t),Y_\eps(t))-L\phi(X_\eps(t))\Big)|
\\&\leq C\sup_{s\leq t, \eps>0}\P^\eps(|X_\eps(s)|>R)+C\sup_{s\leq t, \eps>0}\P^\eps(|Y_\eps(s)|>R)+\frac{C}{R}
\\&+CN^{-\frac{p+\kappa-1}{1-\kappa}}+\sup_{|x|\leq R}|\overline{\mathcal{L}\phi \chi_j}(x)- L\phi(x)|+C\sup_{|x|\leq R}(1-\overline{\chi_j}(x)).
\ea\ee
The constants $R,N,j$ in the above inequality are arbitrary. Taking first $j\to\infty, N\to \infty$ for a fixed $R$,  we get by Lemma \ref{l1}(d),(e) that
we get
\be\label{step4}
\ba
\limsup_{\eps\to 0}|&\E^\eps \Phi( X_{\eps}(s_1), \dots,  X_\eps(s_q))\Big(\mathcal{L}^\eps\phi(X_\eps(t),Y_\eps(t))-L\phi(X_\eps(t))\Big)|
\\&\leq C\sup_{s\leq t, \eps>0}\P^\eps(|X_\eps(s)|>R)+C\sup_{s\leq t, \eps>0}\P^\eps(|Y_\eps(s)|>R)+\frac{C}{R}.
\ea\ee
Then by Lemma \ref{l1} (f),(g) we can pass to the limit $R\to \infty$ and finally get
$$
\limsup_{\eps\to 0}\left|\E^\eps \Phi( X_{\eps}(s_1), \dots,  X_\eps(s_q))\Big(\mathcal{L}^\eps\phi(X_\eps(t),Y_\eps(t))-L\phi(X_\eps(t))\Big)\right|=0.
$$
This proves \eqref{eq1prim} and completes the entire proof.

\section{Appendix. } \label{Proofs}
\begin{proof}[Proof of Lemma \ref{lem:est_exit_time}]
Set 
$\zeta_\ve(t)=\inf\{s\geq 0\ :\ \int_0^s b_\ve^2(z)dz\geq t\}.$
 Making the change of time $ \tilde \eta_{\ve }(t):=\eta_{\ve }(\zeta_\ve(t)),$ we see that
 $ \tilde \eta_{\ve }(t)$ satisfies assumptions of this Lemma with another constant $\tilde A>0$  and
a new Wiener process $\tilde W(t)=W(\zeta_\ve(t))$ but with $\tilde b_\ve(t)\equiv 1.$
Since $(C_2)^{-2}t\leq \zeta_\ve(t)\leq (C_1)^{-2}t,$ without loss of generality we will
 assume that
  $ b_\ve(t)\equiv 1.$

  Set $L_\ve:= Ax^\gamma\frac{d}{dx} +\frac{\ve^2}{2} \frac{d^2}{dx^2}. $
Denote
\[
v_\ve(x):= \int_0^{|x|}\exp\{ \frac{-2Ay^{\gamma+1}}{({\gamma+1})\ve^2} \}
\left(\int_0^y \frac{2}{\ve^2}\exp\{ \frac{2Az^{\gamma+1}}{({\gamma+1})\ve^2}\}dz  \right)dy.
\]
We have $L_\ve v_\ve(x)\geq1, $ $\sgn(x) v'_\ve(x)\geq 0,$ and $v_\ve(0)=0.$

Then by Ito's formula we have
\[
\E v_\ve(\eta_\ve(\tau_\ve(\delta)\wedge n))= \E\int_0^{\tau_\ve(\delta)\wedge n}\big(
a_\ve(s)\eta_\ve^\gamma(s)v'_\ve(\eta_\ve(s))   + \frac{\ve^2}{2}v''_\ve(\eta_\ve(s))  \big)ds\geq
\]
\[
\E\int_0^{\tau_\ve(\delta)\wedge n}\big(
A\eta_\ve^\gamma(s)v'_\ve(\eta_\ve(s))   + \frac{\ve^2}{2}v''_\ve(\eta_\ve(s))  \big)ds
= \E\int_0^{\tau_\ve(\delta)\wedge n}L_\ve v_\ve(\eta_\ve(s))   ds\geq
\]
\[
 \E\int_0^{\tau_\ve(\delta)\wedge n}1   ds=
\E {\tau_\ve(\delta)\wedge n}.
 \]
Passing $n\to\infty$ and applying the Fatou lemma we get a.s. finiteness of $\tau_\ve(\delta)$. Since $v_\ve(\eta_\ve(\tau_\ve(\delta)))=v_\ve(\delta)=v_\ve(-\delta)$, we get the  estimate
\[
\E \tau_\ve(\delta)\leq  v_\ve(\delta).
\]

Let $x>0$ be arbitrary.
 Changing the variables
$s:=\frac{z^{\gamma+1}}{\ve^2}$ and $t:=\frac{y^{\gamma+1}}{\ve^2}$
we get
\be\label{eq:1407}
\begin{aligned}
v_\ve(x)=
\frac{2\ve^{\frac{2}{\gamma+1}}}{(\gamma +1)\ve^2}
  \int_0^{\frac{|x|^{\gamma+1}}{\ve^2}}\exp\{-2At/(\gamma+1) \}
\left(\int_0^{t^{\frac{1}{\gamma+1}}\ve^{\frac{2}{\gamma+1}}}
 \exp\{ \frac{2Az^{\gamma+1}}{(\gamma+1)\ve^2}\}dz  \right)
t^{\frac{-\gamma}{\gamma+1}}dt=
\\
\frac{2\ve^{\frac{4}{\gamma+1 }}}{(\gamma +1)^2\ve^2}
\int_0^{\frac{|x|^{\gamma+1}}{\ve^2}}\exp\{-2At/(\gamma+1) \}
\left(\int_0^t  \exp\{  2As/(\gamma+1)\} s^{\frac{-\gamma}{\gamma+1}} ds  \right)
t^{\frac{-\gamma}{\gamma+1}}dt=
\\
 \frac{2 }{(\gamma +1)^2 } \ve^{\frac{2(1-\gamma)}{\gamma+1}} \int_0^{\frac{|x|^{\gamma+1}}{\ve^2}}\exp\{-2At/(\gamma+1) \}
\left(\int_0^t  \exp\{  2As/(\gamma+1)\} s^{\frac{-\gamma}{\gamma+1}} ds  \right)
t^{\frac{-\gamma}{\gamma+1}}dt.
\end{aligned}
\ee
  It follows from
L'H\^opital's rule
that
for any   $\alpha>0$ and $\beta>-1$:
\[
\int_0^te^{\alpha s} s^\beta ds \sim \alpha^{-1}e^{\alpha t} t^\beta , \ t\to+\infty.
\]
So
\[
\int_0^t  \exp\{  2As/(\gamma+1)\} s^{\frac{-\gamma}{\gamma+1}} ds \sim
\frac{\gamma+1}{2A}\, \exp\{  2At/(\gamma+1)\} t^{\frac{-\gamma}{\gamma+1}}, \ t\to+\infty.
\]
Applying this and L'H\^opital's rule
we get
\[
\ba
\int_0^{u} \exp\{-2At/(\gamma+1) \}
\left(\int_0^t  \exp\{  2As/(\gamma+1)\} s^{\frac{-\gamma}{\gamma+1}} ds  \right)
t^{\frac{-\gamma}{\gamma+1}}dt\sim
\\
\frac{\gamma+1}{2A}\, \int_0^{u}\exp\{-2At/(\gamma+1) \}
\left(  \exp\{  2At/(\gamma+1)\} t^{\frac{-\gamma}{\gamma+1}}  \right)
t^{\frac{-\gamma}{\gamma+1}}dt=\\
\frac{\gamma+1}{2A}\, \int_0^{u}
 t^{\frac{-2\gamma}{\gamma+1}}
 dt =
\frac{(\gamma+1)^2}{2A(1-\gamma)}   u^{\frac{1-\gamma}{\gamma+1}}, \ u\to+\infty.
\ea
\]
Therefore, we get from \eqref{eq:1407}   the following equivalence
  for
any fixed $x\neq 0$ as $\ve\to0:$
\[
\begin{aligned}
v_\ve(x) \underset{\ve\to0}\sim  K   \ve^{\frac{2(1-\gamma)}{\gamma+1}} \left(
\frac{|x|^{\gamma+1}}{\ve^2}\right)^{{\frac{-2\gamma}{\gamma+1}} +1}
 = K_2  \ve^{\frac{2(1-\gamma)}{\gamma+1}} \left(
\frac{|x|^{\gamma+1}}{\ve^2}\right)^{{\frac{1-\gamma}{\gamma+1}} }=
  K_2  \ve^{\frac{2(1-\gamma)}{\gamma+1}}
  {|x|^{1-\gamma }}
 \ve^{{\frac{ 2(\gamma-1)}{\gamma+1}}} = K_2
  {|x|^{1-\gamma }},
\end{aligned}
\]
where  $K$ is a  constant independent of $\delta.$

This yields that for any fixed $\delta\geq 0$:
\[
\limsup_{\ve\to0}\E \tau_\ve(\delta)\leq \limsup_{\ve\to0}\E  v_\ve(\delta)=
K_2 \delta^{1-\gamma }.
\]
This completes the proof of the Lemma.
\end{proof}


\begin{thebibliography}{0}










\bibitem{AF} Attanasio, F., Flandoli, F.  { Zero-noise solutions of linear
transport equations without uniqueness: an example }
 {\it  C.R. Acad. Sci. Paris,}
Ser. I, {\bf 347}  (2009) 753--756.


\bibitem{Ba} Bafico, R. {  On the convergence of the weak solutions of
stochastic differential equations when the noise intensity goes to zero,}
{\it Bollettino UMI} {\bf 5}   (1980) 308--324.

\bibitem{BaficoBaldi}Bafico,  R. , Baldi,  P.  {  Small random perturbations of Peano phenomena}, {\it Stochastics} {\bf 6(2)} (1982)  279--292.



\bibitem{BOQ} Buckdahn, R., Ouknine, Y., Quincampoix, M.  {  On limiting values
of stochastic differential equations with small noise intensity tending to
zero,}
 {\it Bull. Sci. Math.} {\bf 133}  (2009) 229--237.

\bibitem{BS} Borkar, V.S,  Kumar, K. Suresh  {  A new Markov selection procedure
for degenerate diffusions,}
 {\it J. Theoret. Probab.} {\bf 23(3)}  (2010) 729--747.



\bibitem{Flandoli}  Delarue, F., Flandoli, F.  {  The transition point in the zero noise limit for a 1D Peano example},
{\it Discrete Contin. Dyn. Syst.}, {\bf 34(10)} (2014) 4071--4083.


\bibitem{DFV} Delarue, F., Flandoli, F., Vincenzi, D.  {  Noise prevents
collapse of Vlasov-Poisson point charges,}
 {\it Communications on Pure and Applied
Math.} {\bf 67(10)}  (2014) 1700--1736.

\bibitem{DelarueMaurelly} Delarue F., Maurelli M.
 Zero noise limit for multidimensional SDEs driven by a pointy gradient //
arXiv preprint arXiv:1909.08702. -- 2019.

\bibitem{DLN} Dirr, N., Luckhaus, S., Novaga, M.  {  A stochastic selection
principle in case of fattening for curvature flow,}
 {\it Calc. Var. Part. Diff.
Eq.} {\bf 13(4)},  (2001) 405--425.


\bibitem{EngelbertSchmidt3} Engelbert, H. J.,  Schmidt, W. (1991).
 Strong Markov Continuous Local Martingales and Solutions of One-Dimensional
Stochastic Differential Equations (Part III). Mathematische Nachrichten, 151(1), 149-197.

\bibitem{Fr64} Friedman, A. (1964).  Partial Differential Equations of Parabolic Type, Prentice-Hall, New York.

\bibitem{H} Herrmann, S.  {  Ph\'{e}nom\`{e}ne de Peano et grandes d\'{e}%
viations,}
 {\it C.R. Acad. Sci. Paris S\'{e}r. I Math.} {\bf 332(11)} (2001) 1019--1024.


\bibitem{IkedaWatanabe}
 Ikeda, N., Watanabe, S.  {\it  Stochastic differential equations and diffusion processes.}
 North-Holland Mathematical Library, 24. (North-Holland Publishing Co., Amsterdam-New York; Kodansha, Ltd., Tokyo, 1981).


\bibitem{Karatzas} Karatzas I., Shreve S. E. Brownian motion //
Brownian Motion and Stochastic Calculus. - Springer, New York, NY, 1988.

 \bibitem{KrykunMakhno}  Krykun, I.G.,  Makhno, S. Ya.   {  The Peano phenomenon for Ito equations},
 {\it Journal of Mathematical Sciences}, {\bf 192(4)} (2013) 441--458.

\bibitem{K_book} A.\ Kulik.   \emph{Ergodic Behavior of Markov Processes},  de Gruyter, Berlin/Boston, 2017, ISSN 0179-0986

\bibitem{KulikPavlyukevich} Kulik, A., Pavlyukevich, I. Moment bounds for dissipative semimartingales with heavy jumps, http://arxiv.org/abs/2004.12449

\bibitem{Nakao1972} Nakao, S. (1972). On the pathwise uniqueness of solutions of one-dimensional stochastic differential equations. Osaka Journal of Mathematics, 9(3), 513-518.

\bibitem{PilipenkoPavlyukevich}
Pavlyukevich I., Pilipenko A. Generalized selection problem with L\'evy noise
//arXiv preprint arXiv:2004.05421. -- 2020.

\bibitem{PilipenkoProske1} Pilipenko, A., Proske, F.N.
 On a Selection Problem for Small Noise Perturbation in the Multidimensional Case. (2018) Stochastics and Dynamics, v.18, no.6, 23 pages, doi 10.1142/S0219493718500454

\bibitem{PilipenkoProskeSelfSimilar} Pilipenko, A., Proske, F.N.
On perturbations of an ODE with non-Lipschitz coefficients by a small self-similar noise. Statistics \& Probability Letters. Volume 132, January 2018, Pages 62-73 https://doi.org/10.1016/j.spl.2017.09.005






\bibitem{Trevisan}Trevisan, D.  {  Zero noise limits using local times}, {\it Electron. Commun. Probab.} {\bf 18} (2013), no. 31,
7 pp.


\bibitem{Veretennikov}  {Veretennikov, A. }  {  On strong solutions and explicit formulas for solutions of stochastic integral
equations}.  {\it Sbornik: Mathematics},  {\bf 39(3)} (1981) 387--403.








\end{thebibliography}
   \end{document}